\numberwithin{equation}{section}
\newcommand{\udt}{\frac{\mathrm{d}}{\mathrm{d}t}}
\newtheorem{theorem}{Theorem}[section]
\newtheorem{lemma}{Lemma}[section]
\newtheorem{corollary}{Corollary}[section]
\newtheorem{pro}{Proposition}[section]
\newcommand{\Rmnum}[1]{\expandafter\@slowromancap\romannumeral #1@}
\begin{document}

\title{Global existence and stability of near-affine solutions of compressible elastodynamics}

\author[a]{Xianpeng Hu}
	\author[b]{Yuanzhi Tu}
  \author[c]{Changyou Wang}
	\author[b]{Huanyao Wen}
	\affil[a]{Department of Applied Mathematics, The Hong Kong Polytechnic University, Hong Kong, China}
  \affil[b]{School of Mathematics, South China University of Technology, Guangzhou, China}
	\affil[c]{Department of Mathematics, Purdue University, West Lafayette, IN 47907, USA}
  \date{}
	\maketitle
\renewcommand{\thefootnote}{}
	
	\footnote{ {E}-mail: xianphu@polyu.edu.hk(Hu); 202210186864@mail.scut.edu.cn(Tu); wang2482@purdue.edu(Wang)\\; mahywen@scut.edu.cn(Wen).}

\begin{abstract}
    {We prove that for sufficiently small $H^3$-perturbations of an affine solution, the Cauchy problem for 
    the compressible nonlinear elastodynamics in $\mathbb{R}^d$, for $d=2,3$,
    admits a unique global strong solution. Moreover, we establish the asymptotic behavior of the solution.}
\end{abstract}
\vspace{4mm}

{\noindent \textbf{Keywords:} Compressible elastic fluids; near-affine solutions; global existence and\\ stability.}

\vspace{4mm}
{\noindent\textbf{AMS Subject Classification (2020):} 74B20, 76N10, 35Q35.}
\tableofcontents

\section{Introduction}\label{section_1}
We consider the compressible {elastodynamics} in Eulerian coordinate \cite{Gurtin_1981,Dafermos_2010}:
\begin{eqnarray} \label{elasticity_equation_1}
    \begin{cases}
        \partial_t\rho+{\rm div} (\rho u) =0,\\
        \partial_t(\rho u)+{\rm div} (\rho u \otimes u)  + \nabla P(\rho)={\rm div}(\rho F F^\top),\\
        \partial_t F + u\cdot\nabla F=\nabla u F,\\
        (\rho,u,F)|_{t=0} = (\rho_0, u_0, F_0)
    \end{cases}
\end{eqnarray}
on $\mathbb{R}^d \times (0, +\infty)$, {with} $d = 2, 3${, where} $\rho=\rho(x,t)$, $u=u(x,t)$, $F=F(x,t)$ and $P(\rho)=R\rho^{\gamma}$ denotes the fluid density, the velocity field, the deformation gradient and the pressure, respectively. The $i$th component of ${\rm div}(\rho F F^\top)$ on the right-hand side of the momentum equation is $\partial_j(\rho F_{ik} F_{jk})$ and $(\nabla u F)_{ij}=\partial_k u_i F_{kj}$. The physical coefficients $R, \gamma$ satisfy $R>0$ and $\gamma>1$. Without loss of generality, we set $R=1$ throughout this paper. For smooth solutions $(\rho, u, F)$ of \eqref{elasticity_equation_1}, the physical constitution relation
\begin{align}\label{constitution_laws}
\begin{cases}
\rho\  \mathrm{det}F=1,\\
F_{j}\cdot\nabla F_{k}=F_{k}\cdot\nabla F_{j}, \quad 1\leq   j, k\leq d
\end{cases}
\end{align}
holds true for all $t\ge 0$, see \cite{Hu_2011, Qian_2010}. As a consequence of  \eqref{constitution_laws}, it is verified in \cite{Hu_2020, Hu_2020_1} that
\begin{align*}\label{constitution_laws_1}
{\rm div}(\rho F^\top)=0.
\end{align*}

We begin by briefly reviewing the literature on the well-posedness of elastodynamics {around a constant equilibrium}. In three-dimensional compressible elastodynamics, John \cite{John_1984} showed that singularities arise from arbitrarily small, spherically symmetric displacements in the absence of a null condition, while Tahvildar-Zadeh \cite{Tahvildar-Zadeh_1998} established a similar result for large displacements. In \cite{John_1988, Klainerman_1996}, the authors proved the existence of almost global solutions for three-dimensional quasilinear wave equations under the assumption of sufficiently small initial data. Significant progress in demonstrating the global existence of classical solutions was achieved independently by Sideris \cite{Sideris_1996,Sideris_2000} and Agemi \cite{Agemi_2000}, both assuming that the nonlinearity meets a null condition in three dimensions. For incompressible elastodynamics, particularly in 3D, the Hookean component of the system is inherently degenerate and satisfies a null condition, allowing Sideris and Thomases \cite{Sideris_2005,Sideris_2007} to establish global existence for this case. However, as noted by Lei \cite{Lei_2015}, the existence problem becomes more intricate in two dimensions, even with the null condition, since quadratic nonlinearities exhibit only critical time decay. The first significant result in tackling these issues was presented by Lei, Sideris, and Zhou \cite{Lei_2015}, who demonstrated the almost global existence of incompressible isotropic elastodynamics in 2D by reformulating the system in Eulerian coordinate. Lei \cite{Lei_2016} further leveraged the strong linear degeneracy of the nonlinearities in incompressible isotropic Hookean elastodynamics, which automatically satisfy a strong null condition, proving the global existence of classical solutions to the 2D Cauchy problem for small initial displacements in a specific weighted Sobolev space, with an alternative proof available in \cite{Wang_2017}. {All these references studied the stability issue of \eqref{elasticity_equation_1} around a non-zero constant equilibrium $$(\hat{\rho}, \hat{u}, \hat{F})=(1,0, I).$$ Indeed, the linearized system of \eqref{elasticity_equation_1}
at $(1, 0, I)$ is a coupling wave system for shear waves and pressure waves with two different wave speeds, where $I$ stands for the identity matrix of order $d$. For incompressible models, the pressure wave disappears, and the linearized system for incompressible elastodynamics takes the form
\begin{equation}\label{wave}
\partial_t^2 \omega-\Delta \omega=0,
\end{equation}
 where $\omega$ is either the vorticity of velocity $u$ or the curl of $F$. The stability around a constant equilibrium is justified by following the well posedness theories for quasilinear wave equations via either the vector field method \cite{Sideris_2005, Lei_2016} or the space-time resonance method \cite{Wang_2017}.}

{Different from the stability around a constant equilibrium, there is an extensive range of interests on the stability of \eqref{elasticity_equation_1} around some non-constant equilibrium in 
the communities of both physics and mathematics. As an example of non-constant equilibrium, in }\cite{Sideris_2017}, an affine motion is defined as a one-parameter family of deformations of the form
$$x(y,t)=A(t)y,$$ where $x(y,t)$ is the flow map with the initial position $y\in\mathbb{R}^d$. For affine motions, all hydrodynamical quantities are expressed explicitly in terms of the deformation gradient $A(t)$ and the initial conditions. The authors in \cite{Hu_preprint} {studied the stability of} an affine solution for the 3D compressible viscoelasticity system in Eulerian coordinate, given by
\begin{equation}\label{1-1}
    \bar{\rho}=\frac{1}{(1+t)^{3}}, \bar{u} = \frac{x}{1+t},  \bar{F}= (1+t)I,
  \end{equation}
 with the initial data $(1, x, I).$ {The magnitude of the deformation gradient in \eqref{1-1} grows at a rate proportional to time, and that deformation gradient does not lead to finite time collapse or blow-up of the flow. The stability of the affine solution \eqref{1-1} is justified in \cite{Hu_preprint} for compressible viscoelasticity under small $H^2$-perturbations. The key difficulty lies in the fact the coefficients of the linearised system in \cite{Hu_preprint} depends on the spatial and temporal variables. A natural question then arises: does such stability persist for compressible elastodynamics \eqref{elasticity_equation_1} when the viscosity is absent. In this work, we provide an affirmative answer
 by establishing}
\begin{theorem}\label{global_existence} For $d=2, 3$,
there exists a diffeomorphism $\xi_0: \mathbb{R}^d \to \mathbb{R}^d$  {and a small constant $\varepsilon>0$} such that if the initial data $(\rho_0, u_0, F_0)$, with
$(\rho_0(x) -1, u_0(x) -x, F_0(x) -I)\in H^3_x(\mathbb{R}^d)$, satisfies
{\begin{eqnarray*}\label{decay_estiwmates_1}
    \begin{cases}
      \rho_0(x) \mathrm{det}F_0(x)=1,\,\,\,\,F_0(x) = \nabla_y \xi_0(y),\,\,\,\,  y =\xi_0^{-1}(x), \ \forall x\in\mathbb{R}^d, \\[2mm]
      \|\rho_0(x) -1\|_{H^3_x} + \|u_0(x) -x\|_{H^3_x} + \|F_0(x) -I\|_{H^3_x} + \|\xi_0^{-1}(x)-x\|_{H^4_x}\le \varepsilon,
    \end{cases}
\end{eqnarray*}}then the Cauchy problem \eqref{elasticity_equation_1} admits a unique global strong solution 
$(\rho, u, F)$ such that $\big(\rho(x,t) -\frac{1}{(1+t)^d}, u(x,t)-\frac{x}{1+t}, F(x,t) -(1+t)I\big) \in C\big([0, \infty); H^3_x(\mathbb{R}^d)\big)$. Moreover, $(\rho, u, F)$ satisfies the following estimates:
\begin{eqnarray*}\label{decay_estimates_1}
  \begin{cases}
    \|\nabla_x^{i}(\rho(x,t) -\frac{1}{(1+t)^d})\|_{L^2_x}\le C\varepsilon(1+t)^{-\frac{d}{2}-\frac{1}{2}-i},\\[1mm]
     \|\nabla_x^{i}\big(u(x,t)-\frac{x}{1+t}, F(x,t) -(1+t)I\big)\|_{L^2_x} \le C\varepsilon(1+t)^{\frac{d}{2}+\frac{1}{2}-i}
  \end{cases}
\end{eqnarray*}for all $t>0$ and $i=0,1,2,3$, {where $C$ is a generic positive constant depending on some known constants but independent of $\varepsilon$ and $t$.}
\end{theorem}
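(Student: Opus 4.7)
The plan is to absorb the affine background by a combined rescaling and self-similar change of spatial variables, which reduces the problem to an autonomous system of damped wave type on a stationary background, and then to run a compensated energy argument that propagates the $H^3$ smallness uniformly in time.

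\textbf{Step 1 (Reformulation).} First I would introduce the self-similar coordinate $y=x/(1+t)$ and the renormalised perturbations $(\tilde\tau,\tilde w,\tilde H)(y,t)$ defined by
\begin{equation*}
\rho=\frac{1+\tilde\tau}{(1+t)^d},\qquad u=\frac{x}{1+t}+(1+t)\tilde w,\qquad F=(1+t)(I+\tilde H).
\end{equation*}
Under this transformation, the material derivative along the affine flow becomes $\partial_t+\bar u\cdot\nabla_x=\partial_t|_y$, and the time powers inherited from the background cancel exactly against the Jacobian, producing the autonomous system (in $(y,t)$)
\begin{equation*}
\partial_t\tilde\tau+{\rm div}_y\tilde w=-{\rm div}_y(\tilde\tau\tilde w),
\end{equation*}
\begin{equation*}
\partial_t\tilde w+\tfrac{2}{1+t}\tilde w-\nabla_y\tilde\tau-{\rm div}_y(\tilde H+\tilde H^\top)=\mathcal N_w,\qquad \partial_t\tilde H-\nabla_y\tilde w=\mathcal N_H,
\end{equation*}
where $\mathcal N_w,\mathcal N_H$ are quadratic in the perturbations and include a strongly decaying pressure correction of order $(1+t)^{d(1-\gamma)-2}$. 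The algebraic constraints $\rho\det F=1$ and ${\rm div}(\rho F^\top)=0$ linearise to $\tilde\tau+{\rm tr}\,\tilde H=0$ and $\nabla_y\tilde\tau+{\rm div}_y\tilde H^\top=0$; both are propagated by the flow. Using the second constraint, the linearised momentum collapses to $\partial_t\tilde w+\frac{2}{1+t}\tilde w={\rm div}_y\tilde H$; writing $\tilde w=\partial_t\phi$, $\tilde H=\nabla_y\phi$ then shows that $\phi$ obeys the scale-invariant damped wave equation
\begin{equation*}
\partial_t^2\phi+\tfrac{2}{1+t}\partial_t\phi-\Delta_y\phi=0.
\end{equation*}

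\textbf{Step 2 (Energy estimates and linear decay).} Local existence in $C([0,T];H^3_y)$ follows from the classical theory for quasi-linear symmetric-hyperbolic systems, the $\frac{2}{1+t}\tilde w$-term being a bounded lower-order perturbation on any finite interval. For uniform bounds I would apply $\partial_y^\alpha$ to the system for each $|\alpha|\le 3$ and construct a Kawashima--Shizuta-type compensated functional
\begin{equation*}
\mathcal F_\alpha(t)=\tfrac12\bigl(\|\partial^\alpha\tilde w\|_{L^2}^2+\|\partial^\alpha\tilde H\|_{L^2}^2+\|\partial^\alpha\tilde\tau\|_{L^2}^2\bigr)+\tfrac{\lambda}{1+t}\langle \partial^\alpha\tilde w,\partial^\alpha\phi\rangle_{L^2},
\end{equation*}
whose cross-multiplier transfers the velocity dissipation $\frac{2}{1+t}\|\partial^\alpha\tilde w\|^2$ onto the $\partial^\alpha\tilde H$ and $\partial^\alpha\tilde\tau$ components. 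After integration by parts, the linear contributions assemble into a differential inequality
\begin{equation*}
\frac{d}{dt}\mathcal F_\alpha+\frac{c}{1+t}\mathcal F_\alpha\le\mathcal R_\alpha,
\end{equation*}
with $c>0$ driven by the scale-invariant damping coefficient $\mu=2$; an optimal choice of $\lambda$ yields $c=1$, which is exactly the rate compatible with the target bound $\mathcal F(t)\le C\varepsilon^2(1+t)^{-1}$. Translating back via $\|\nabla_x^i f\|_{L^2_x}=(1+t)^{d/2-i}\|\nabla_y^i\tilde f\|_{L^2_y}$, together with the extra $(1+t)^{-d}$, $(1+t)$, $(1+t)$ factors in the ansatz, reproduces the decay rates stated in the theorem.

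\textbf{Step 3 (Nonlinear closure and main obstacle).} The remainder $\mathcal R_\alpha$ comprises Moser-type products of derivatives of $(\tilde\tau,\tilde w,\tilde H)$ together with the $(1+t)^{d(1-\gamma)-2}$ pressure term; since $d=2,3$, the embedding $H^3(\mathbb{R}^d)\hookrightarrow L^\infty(\mathbb{R}^d)$ allows one to bound $\mathcal R_\alpha\le C\mathcal F^{3/2}+(1+t)^{-N}\mathcal F$, and summing gives $\frac{d\mathcal F}{dt}+\frac{c}{1+t}\mathcal F\le C\mathcal F^{3/2}+(1+t)^{-N}\mathcal F$. Under the bootstrap $\mathcal F(t)\le K\varepsilon^2(1+t)^{-1}$ and smallness of $\varepsilon$, this closes to the improved bound $\mathcal F(t)\le\tfrac12K\varepsilon^2(1+t)^{-1}$, and the standard continuation criterion then extends the local solution to a global $H^3$ solution. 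The hard part will be precisely this nonlinear closure: the damping $\frac{2}{1+t}$ is scale-invariant and yields only the borderline decay $(1+t)^{-1/2}$ for the solution itself, which in turn makes the quadratic nonlinearities integrable in the energy inequality only by a margin of $\frac12$ in the time exponent; every commutator estimate and every choice of the compensated multiplier $\lambda$ must respect this borderline balance. The cancellations encoded in the preserved constraints $\rho\det F=1$ and ${\rm div}(\rho F^\top)=0$ are essential for reducing the linearised momentum to the clean wave form $\partial_t\tilde w+\frac{2}{1+t}\tilde w={\rm div}\tilde H$, without which the dissipation strength would not match the required rate. Unlike the viscous model treated in \cite{Hu_preprint}, there is no parabolic smoothing to absorb a borderline loss, so the entire argument must live within the hyperbolic framework and make essential use of the wave-equation structure uncovered in Step 1.
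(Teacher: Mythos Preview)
Your structural reading is correct and matches the paper's core mechanism: after rescaling, the perturbation satisfies a damped wave equation $\partial_t^2\phi+\tfrac{2}{1+t}\partial_t\phi-\Delta\phi=0$ plus a pressure correction carrying the factor $(1+t)^{-d(\gamma-1)-2}$, and a compensated energy with a $\tfrac{1}{1+t}$-weighted cross term between velocity and displacement yields the borderline inequality $\tfrac{d}{dt}\mathcal{F}+\tfrac{1}{1+t}\mathcal{F}\le\cdots$ with target decay $\mathcal{F}\sim(1+t)^{-1}$. The paper obtains exactly this, but in \emph{Lagrangian} coordinates: writing $\xi(y,t)=(1+t)(y+\widetilde\eta(y,t))$, the displacement $\widetilde\eta$ is a primitive variable from the outset, $v=\partial_t\widetilde\eta$, and the deformation gradient is literally $\nabla_y\xi$, so your potential $\phi$ is simply $\widetilde\eta$ and requires no separate construction.

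There is, however, a genuine gap in your nonlinear closure. In the self-similar Eulerian frame the material derivative becomes $\partial_t|_y+\widetilde w\cdot\nabla_y$, so the quadratic remainders $\mathcal N_w,\mathcal N_H$ contain transport terms $\widetilde w\cdot\nabla_y(\widetilde\tau,\widetilde w,\widetilde H)$ with \emph{no} decaying prefactor. After commuting $\partial^\alpha$ and integrating by parts, these contribute $C\|\nabla\widetilde w\|_{L^\infty}\mathcal F\le C\mathcal F^{3/2}$, exactly as you write. But the inequality $\tfrac{d\mathcal F}{dt}+\tfrac{1}{1+t}\mathcal F\le C\mathcal F^{3/2}$ does \emph{not} close the bootstrap $\mathcal F\le K\varepsilon^2(1+t)^{-1}$: setting $G=(1+t)\mathcal F$ gives $\tfrac{dG}{dt}\le CG^{3/2}(1+t)^{-1/2}$, and integrating yields $G(t)\lesssim G(0)+\varepsilon^3(1+t)^{1/2}$, which grows without bound. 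The margin you mention is on the wrong side. In the paper's Lagrangian formulation this obstruction is absent: there are no convective terms, and the \emph{only} nonlinearity is the pressure contribution $a_{il}\partial_l q$, which comes with the coefficient $(1+t)^{-d(\gamma-1)-2}\le(1+t)^{-2}$. Every nonlinear remainder therefore carries an extra $(1+t)^{-2}$, giving $\tfrac{d\mathcal L}{dt}+\tfrac{1}{1+t}\mathcal L\le \tfrac{C}{(1+t)^2}(\mathcal L+\mathcal L^{7/2})$, and now $(1+t)\mathcal L$ is genuinely bounded. A secondary issue is that your functional $\mathcal F_\alpha$ uses $\phi$, which you only introduce at the linear level; in Eulerian variables $\widetilde H$ is not a gradient, so $\phi$ has no canonical nonlinear definition. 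The paper sidesteps both problems by making the Lagrangian displacement the basic unknown.
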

We prefer to work in the framework of $H^3$-spaces in order to keep the strategy as clear as possible. With our strategy in hand, it is possible to reduce the regularity of solutions to some Besov spaces, however the presentation would be more technical and complicated, and we leave it for the interested readers.

{With the absence of the elastic stress $\textrm{div}(\rho FF^\top)$ in the momentum equation, the system \eqref{elasticity_equation_1} becomes a degenerate system whenever the physical vacuum is present. Indeed, by globally solving a Hamiltonian system of ordinary differential equations for the deformation gradient, the free boundary problem of the three dimensional compressible and incompressible Euler equations with an affine initial condition and physical vacuum boundary condition is constructed in \cite{Sideris_2017}. Hadzic and Jang in \cite{Hadzic_2018} has verified the global-in-time nonlinear stability in Sobolev spaces of the affine solutions to compressible Euler equations for $\gamma\in(1,\frac53]$, and the full range of $\gamma\in (1,\infty)$ is studied in \cite{Shkoller_2019}. Note that the determinant of deformation gradients of the affine solution in \cite{Sideris_2017, Hadzic_2018, Shkoller_2019} grows at the maximal rate of $(1+t)^3$, as $t\rightarrow\infty$. In this sense, Theorem \ref{global_existence} would be regarded as the asymptotic nonlinear stability of the affine solution in \cite{Sideris_2017, Hadzic_2018, Shkoller_2019} when the elastic stress is taken into consideration in the momentum equation.

The presence of the elastic stress $\textrm{div}(\rho FF^\top)$ in the momentum equation makes the wave operator non-degenerate. Motivated by the study of compressible Euler equations in \cite{Shkoller_2019}, we turn to employing a Lagrangian transformation and consider a different class of affine solutions that nevertheless share the same scaling property, namely $\det A(t)\sim (1+t)^3$. Compared with the Euler equations, the presence of elastic stress introduces an additional diffusion mechanism, see~\eqref{compressible_elasticity_equation_5}. However, this feature requires a different analytical framework, and the framework developed in \cite{Shkoller_2019} cannot be applied in a straightforward manner. Fortunately, the expanding factor on $t$ in the affine solution \eqref{1-1} contributes a damping term for the velocity in linearized system of \eqref{elasticity_equation_1} in Lagrangian coordinate, see \eqref{compressible_elasticity_equation_5}. However, due to the expanding factor in $t$ of the affine solution,  the coefficients of the linearized system of  \eqref{elasticity_equation_1} involves functions depending on $t$. This kind of
hyperbolic systems with variable coefficients excludes the applicability of both the vector field method and the space-time resonance method, which are designed for quasilinear wave equations with constant variables. In order to overcome this difficulty, we turn to the $t-$dependent damping effect of the velocity. Indeed, with this damping at hand, different weights for unknowns will be constructed in the form of $\mathcal{L}(t)$, and  the differential inequality \eqref{est_L_1} is verified. More precisely, since we are concerned with the stability of a non-constant equilibrium, it is necessary to introduce different temporal weights in a series of uniform estimates. For instance, the terms in $\mathcal{L}(t)$ in \eqref{L} carry distinct weights {depending on $\gamma$ and the dimension}. Thanks to the presence of the deformation gradient, which provides an effective diffusion mechanism, the temporal weights of all derivatives of all {unknowns} become uniform. This uniformity plays a key role in establishing the global well-posedness of the compressible elastodynamics system in both two and three dimensions for all $\gamma>1$ within a unified framework. As a consequence, the global existence in Lagrangian coordinate would be established in Theorem \ref{global_existence_Lagrangian} with the help of }the local well-posedness  and the global-in-time estimate of the quantity $\mathcal{L}(t)$. 

The global well-posedness in Lagrangian coordinate will be transferred back into the Eulerian coordinate as stated in Theorem \ref{global_existence} via the flow map. We remark that in Lagrangian coordinate, the impact of differentiation on time decay is largely concealed, whereas in Eulerian coordinate each additional derivative accelerates decay by a factor of $(1+t)^{-1}$. Moreover, the $L^2$ norms of $\rho(x,t) -\frac{1}{(1+t)^d}$, $u(x,t)-\frac{x}{1+t}$ and  $F(x,t) -(1+t)I$ decay at faster rates compared with those obtained in \cite{Hu_preprint}.

The rest of the paper is organized as follows. In Section \ref{section_3},
we will reformulate the Cauchy problem \eqref{elasticity_equation_1} in the Lagrangian coordinate and establish the global well-posedness of system \eqref{compressible_elasticity_equation_5} through energy estimates. Building upon Theorem \ref{global_existence_Lagrangian}, Section \ref{section_4} is devoted to the construction of the global solution of system \eqref{elasticity_equation_1} 
and the proof of Theorem \ref{global_existence}.
\section{Global well-posedness in the Lagrangian coordinate}\label{section_3}
\subsection{Reformulation in the Lagrangian coordinate}\label{section_2}
In this section, we reformulate the Cauchy problem \eqref{elasticity_equation_1} in the Lagrangian coordinate. Generally speaking, establishing the existence and uniqueness of a global-in-time solution to \eqref{elasticity_equation_1} directly proves to be challenging. To overcome this difficulty, we adopt the Lagrangian coordinate as our analytical framework.  To facilitate the transition between the Lagrangian and the Eulerian coordinate, we assume that there exists a diffeomorphism  $\xi_0$ from $\mathbb{R}^d$ to $\mathbb{R}^d$ satisfying
\begin{equation}\label{flow_map_6}
  x =\xi_0(y),\,\,\,\,\,\,\,\,\,\, F_0(\xi_0(y)) = \nabla \xi_0(y).
\end{equation}
Let $\xi(y,t)$ and $V(y,t)$ denote the ``position" and ``velocity" of the fluid particle $y$ at time $t$. Then we have
\begin{align*}\label{flow_map}
    \begin{cases}
        \partial_t \xi(y,t)=u(\xi(y, t),t), \ {\rm{in}}\ \mathbb{R}^d\times (0,\infty),\\
        \xi(y,0)=\xi_0(y), \ {\rm{in}}\ \mathbb{R}^d.
    \end{cases}
    \end{align*}
We define the following Lagrangian variables:
\begin{equation*}\label{flow_map_1}
  V(y,t):= u(\xi(y, t),t),\,\pi(y,t):=\rho(\xi(y,t),t),\,p(y,t):=P(\xi(y,t),t),\,H(y,t):=F(\xi(y,t),t)
\end{equation*}
and introduce the quantities
\begin{equation}\label{flow_map_2}
  B:= [(\nabla \xi)^{-1}]^\top,\,\,\,\,\,\,\,\,\,\,\,\,\,\,\,\,\,\,\,\,\,\,\,\,\,\,\,\,\,\,\,\,\mathcal{J}:= {\rm det} \nabla \xi,\,\,\,\,\,\,\,\,\,\,\,\,\,\,\,\,\,\,\,\,\,\,\,\,\,\,\,\,\,\,\,\,b:= \mathcal{J}B.
\end{equation}
These definitions yield the fundamental relationship
\begin{eqnarray}\label{flow_map_2_1}
  \begin{cases}
  \partial_{\xi_i} \rho(\xi(y,t),t) = B_{ik} \partial_{y_k} \pi(y,t),\\ \partial_{\xi_i} u_{j}(\xi(y,t),t) = B_{ik} \partial_{y_k} V_{j}(y,t),\\ \partial_{\xi_k} F_{ij}(\xi(y,t),t) = B_{ks} \partial_{y_s} H_{ij}(y,t).
  \end{cases}
\end{eqnarray}
Using \eqref{flow_map_2_1}, we can reformulate the system \eqref{elasticity_equation_1} in the  Lagrangian coordinate as
\begin{eqnarray}\label{compressible_elasticity_equation_2}
	\begin{cases}
		\partial_t \xi=V,\\
        \partial_t \pi + \pi B_{kj}\partial_j V_{k}=0,\\
		\pi \partial_t V_{i} + B_{il}\partial_l p - B_{kl}\partial_l(\pi H_{ij}H_{kj}) =0,\\
        \partial_t H_{ij} = B_{kl}\partial_l V_{i} H_{kj},\\
		(\xi, V, \pi, H)\big|_{t=0} = \big(\xi_0, V_0, \pi_0, H_0\big),
	\end{cases}
  \end{eqnarray}
on $\mathbb{R}^d\times (0, +\infty)$, where $V_0 = u_0(\xi_0),\,\, \pi_0 = \rho_0(\xi_0),\,\, H_0 = F_0(\xi_0).$

Building upon the approaches developed in \cite{Gu_2023, Xu_2013}, we simplify system \eqref{compressible_elasticity_equation_2} as follows. First, applying Jacobi's formula
\begin{equation}\label{flow_map_3}
    \mathcal{J}_t=\mathcal{J} B_{kj}\partial_j V_{k} = b_{kj}\partial_j V_{k},
\end{equation}
we reduce the second equation in \eqref{compressible_elasticity_equation_2} to the conservation law $\partial_t (\pi\mathcal{J})=0$. This implies
\begin{equation}\label{flow_map_4}
    \pi =\mathcal{J}^{-1} \pi_{0}\mathcal{J}_{0}=\mathcal{J}^{-1},
\end{equation}
where we have used the fact that $\pi_{0}\mathcal{J}_{0}=\rho_0\,{\rm det} F_0= 1$.

Since $\partial_t(B^{\top}\nabla \xi) = 0$ holds, we can deduce
\begin{equation}\label{flow_map_4_1}
  \partial_t B_{ij} = -B_{ik} \partial_k V_{l}B_{lj}.
\end{equation}
By combining \eqref{flow_map_4_1} with the fourth equation of \eqref{compressible_elasticity_equation_2}, we deduce that $\partial_t (B^\top H)=0.$ Together with the initial condition $F_0(\xi_0(y)) = \nabla \xi_0(y)$ from \eqref{flow_map_6}, we obtain
\begin{equation}\label{flow_map_5}
  H= \nabla \xi (\nabla \xi_0)^{-1} \nabla \xi_0 = \nabla \xi.
\end{equation}
Furthermore, applying differential identities similar to \eqref{flow_map_3} and \eqref{flow_map_4_1}, we derive
\begin{equation}\label{flow_map_5_1}
  \nabla\mathcal{J}=\mathcal{J} B_{kj}\partial_j \nabla \xi_{k},\,\,\,\,\,  \nabla B_{ij} = -B_{ik} \partial_k \nabla \xi_{l}B_{lj},
\end{equation}
which yields the Piola identity:
\begin{equation}\label{flow_map_7}
  \partial_l b_{il} = \partial_l (\mathcal{J} B_{il})=0.
\end{equation}
Combining \eqref{flow_map_5}, \eqref{flow_map_7} with the fact that $B^{\top}\nabla \xi = I$, we can simplify the third equation in \eqref{compressible_elasticity_equation_2} as follows.
\begin{equation}\label{flow_map_8}
  \begin{split}
  B_{kl}\partial_l(\pi H_{ij}H_{kj}) &= \mathcal{J}^{-1}b_{kl}\partial_l(\pi H_{ij}H_{kj}) = \mathcal{J}^{-1}\partial_l(b_{kl}\pi H_{ij}H_{kj})\\ &= \mathcal{J}^{-1}\partial_l(B_{kl} H_{ij}H_{kj}) = \mathcal{J}^{-1}\partial_l(\delta_{lj}H_{ij}) = \mathcal{J}^{-1}\Delta \xi_{i}.
  \end{split}
\end{equation}
Finally, substituting \eqref{flow_map_4} -\eqref{flow_map_8} into \eqref{compressible_elasticity_equation_2}, we derive the following simplified system:
\begin{eqnarray} \label{compressible_elasticity_equation_3}
	\begin{cases}
		\partial_t \xi=V,\\
		\partial_t V_{i} + b_{il}\partial_l p -\Delta \xi_{i} =0,\\
		(V,\xi)|_{t=0} = \big(V_0, \xi_0\big)
	\end{cases}
  \end{eqnarray}
on $\mathbb{R}^d\times (0, +\infty)$.
It is not hard to see that, under the condition that $\xi(y,t)$ is invertible for all $y\in \mathbb{R}^3$ and $t\ge 0$, the solutions to system \eqref{elasticity_equation_1} and system \eqref{compressible_elasticity_equation_3} are equivalent. In this section, the Sobolev norms of spatial variables are with respect to the variable $y.$
Our main result in this section is stated as follows:
    \begin{theorem}\label{global_existence_Lagrangian}
       For $d=2,3$,  there exists a sufficiently small  constant $\varepsilon_1>0$ such that the Cauchy problem \eqref{compressible_elasticity_equation_3} admits a global strong solution 
       $(V,\xi)$ satisfying $(V(y,t)-y, \xi(y,t) - (1+t)y)$ in $C\big([0, \infty); H^3_y(\mathbb{R}^d)\times H^4_y(\mathbb{R}^d)\big)$,  provided that
          \begin{equation*} \label{initial_assumption_Lagrangian}
              \begin{split}
              \|V_0(y)-y\|_{H^3_y} + \|\xi_0(y) -y\|_{H^4_y}\le \varepsilon_1.
              \end{split}
          \end{equation*}
Moreover, $(V,\xi)$ satisfies the following estimates:
\begin{eqnarray}\label{decay_estimates_Lagrangian}
  \begin{cases}
    \|V(y,t) -y\|_{H^3_y} \le C\varepsilon_1(1+t)^{\frac{1}{2}},\\[1mm]
    \|\xi(y,t) - (1+t)y\|_{H^3_y} \le C\varepsilon_1(1+t)^{\frac{3}{2}},\\[1mm]
     \|\nabla_y \xi(y,t) - (1+t)I\|_{H^3_y} \le C\varepsilon_1(1+t)^{\frac{1}{2}}
  \end{cases}
\end{eqnarray}
for all $t>0$, where $C$ is a generic positive constant depending on some known constants but independent of $\varepsilon_1$ and $t$.
\end{theorem}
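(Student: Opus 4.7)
The overall plan is to combine a standard local well-posedness theory for the quasilinear system \eqref{compressible_elasticity_equation_3} with uniform-in-time a priori estimates for the perturbation of the affine solution, closed by a continuity argument. To this end I would introduce the perturbation variables $W := V - y$ and $\eta := \xi - (1+t)y$, so that $\partial_t\eta = W$ and the target bounds \eqref{decay_estimates_Lagrangian} become bounds on $W$, $\eta$ and $\nabla\eta$ in $H^3_y$. Since $\nabla\bar\xi = (1+t)I$, the background quantities are $\bar B = (1+t)^{-1}I$, $\bar{\mathcal{J}} = (1+t)^d$, $\bar b = (1+t)^{d-1}I$, and $\bar p = (1+t)^{-d\gamma}$ is spatially constant, so the pressure contributes nothing at zeroth order. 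Expanding $\mathcal{J}$, $B$ and $p = \mathcal{J}^{-\gamma}$ in powers of $(1+t)^{-1}\nabla\eta$ and linearising yields a perturbation equation for $W$ of the schematic form $\partial_t W = \Delta\eta - a(t)\nabla\,\mathrm{div}\,\eta + \text{quadratic}$, in which the first term carries the elastic diffusion and the coefficient $a(t)$ in front of the pressure part carries an explicit negative power of $1+t$ that will play the role of a $t$-dependent damping.

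The heart of the proof is then the construction of the weighted energy functional $\mathcal{L}(t)$ alluded to in the introduction and the derivation of the differential inequality \eqref{est_L_1}. Motivated by the target rates in \eqref{decay_estimates_Lagrangian}, I expect $\mathcal{L}(t)$ to be, schematically, a sum of $H^3_y$-norms of $W$, $\nabla\eta$, and $\eta$ weighted respectively by $(1+t)^{-1}$, $(1+t)^{-1}$, and $(1+t)^{-3}$, possibly augmented by cross terms designed to extract the damping. For each multi-index $|\alpha|\le 3$ I would apply $\partial^\alpha$ to the momentum equation in \eqref{compressible_elasticity_equation_3}, test against $\partial^\alpha W$, and integrate by parts using the Piola identity \eqref{flow_map_7} to rewrite $b_{il}\partial_l$ in divergence form, thereby avoiding any loss of derivatives. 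Integration by parts in the $-\Delta\xi_i$ term, together with $\partial_t\eta = W$, produces a dissipation of $\|\nabla\partial^\alpha\eta\|_{L^2}^2$ that closes the $\eta$-estimate, while the time-decaying coefficient in front of the linearised pressure supplies the damping for $W$. The presence of the elastic stress is exactly what makes the time weights of all derivatives of all unknowns uniform, as emphasised in the introduction.

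The main obstacle will be bookkeeping the many $t$-dependent coefficients that appear on differentiating $b$, $\mathcal{J}$, and $p$. Under a bootstrap hypothesis $\mathcal{L}(t)\le C\varepsilon_1^2$, the smallness of $(1+t)^{-1}\|\nabla\eta\|_{H^3_y}$ keeps the Neumann-type expansions of $B$ and $\mathcal{J}^{-\gamma}$ convergent and ensures that every commutator or remainder scales like $\varepsilon_1\mathcal{L}(t)/(1+t)$, which is integrable in $t$ and therefore absorbable either by the damping or by a Gronwall-type argument. The delicate point is to verify that, with the expected choice of weights, the \emph{same} temporal weight governs all unknowns and all derivative orders, so that the cubic and higher error terms do not overwhelm the quadratic dissipation. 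Once the inequality $\frac{d}{dt}\mathcal{L}(t)\lesssim \varepsilon_1\mathcal{L}(t)/(1+t)$ (or, after absorbing subcritical errors, $\frac{d}{dt}\mathcal{L}(t)\le 0$) is established, it yields $\mathcal{L}(t)\le C\varepsilon_1^2$ for all $t\ge 0$, which translates directly into \eqref{decay_estimates_Lagrangian}; combined with the local well-posedness and the standard continuation criterion, this produces the global strong solution asserted in Theorem \ref{global_existence_Lagrangian}.
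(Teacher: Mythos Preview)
Your overall architecture (local theory plus weighted a priori estimates plus continuation) matches the paper, and your target weights are consistent with \eqref{decay_estimates_Lagrangian}. However, two concrete points in your description are wrong or missing and would block the energy estimate from closing as written.

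\textbf{The damping mechanism is misidentified.} Testing $-\Delta\xi_i=-\Delta\eta_i$ against $\partial^\alpha W_i$ and using $\partial_t\eta=W$ gives $\tfrac12\tfrac{d}{dt}\|\nabla\partial^\alpha\eta\|_{L^2}^2$, i.e.\ an energy contribution, not dissipation; and the decaying coefficient in front of the pressure makes that term \emph{negligible}, not damping. In your variables the momentum equation is $\partial_t W-\Delta\eta+b\nabla p=0$ with no friction at all. The paper instead writes $\xi=(1+t)\eta$, $v=\partial_t\eta$, $\widetilde\eta=\eta-y$, which manufactures the explicit friction $\tfrac{2}{1+t}v$ in \eqref{compressible_elasticity_equation_5}. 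Testing against $\nabla^k v$ then yields the damping $\tfrac{2}{1+t}\|\nabla^k v\|_{L^2}^2$, and testing against $\tfrac{1}{1+t}\nabla^k\widetilde\eta$ yields $\tfrac{1}{1+t}\|\nabla^{k+1}\widetilde\eta\|_{L^2}^2$; the cross term $\tfrac{2}{1+t}\langle\nabla^k v,\nabla^k\widetilde\eta\rangle$ in \eqref{L} is precisely what lets these two estimates be added without loss. The outcome is \eqref{est_L_1}, namely $\tfrac{d}{dt}\mathcal L+\tfrac{1}{1+t}\mathcal L\le \tfrac{C}{(1+t)^2}(\mathcal L+\mathcal L^{7/2})$, and the bootstrap runs on $(1+t)\mathcal L(t)$ rather than on $\mathcal L(t)$; your proposed inequality $\tfrac{d}{dt}\mathcal L\lesssim \varepsilon_1\mathcal L/(1+t)$ would only give $\mathcal L(t)\le \mathcal L(0)(1+t)^{C\varepsilon_1}$ and does not close.

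\textbf{The top-order pressure term needs an extra idea.} After Piola one still faces $\int a_{il}\nabla^3 q\,\partial_l\nabla^3 v_i\,dy$, which carries four derivatives on $v$. The paper uses $a_{il}\partial_l v_i=\partial_t J=-\gamma^{-1}J^{\gamma+1}\partial_t q$ (see \eqref{local_existence_10}--\eqref{local_existence_13} and \eqref{est_H3_5}) to convert this into $\tfrac{d}{dt}\int J^{\gamma+1}|\nabla^3 q|^2\,dy$ plus commutators, which is why $\mathcal L$ in \eqref{L} contains the additional piece $\tfrac{1}{\gamma(1+t)^{d(\gamma-1)+2}}\int J^{\gamma+1}|\nabla^3 q|^2\,dy$. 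Without this, the $H^3\times H^4$ estimate does not close at the top order.
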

The proof of Theorem \ref{global_existence_Lagrangian} will be given in Sections \ref{subsection_3}-\ref{subsection_6}.
\subsection{Perturbations of affine solutions}\label{subsection_2}
In order to simplify the proof of Theorem \ref{global_existence_Lagrangian}, we reformulate the system \eqref{compressible_elasticity_equation_3} to the following system \eqref{compressible_elasticity_equation_5}. Motivated by \cite{Shkoller_2019}, we consider the affine solution given by $\xi(y,t)=(1+t)y$ and denote the perturbation to this affine flow by $\eta(y, t)$. We then seek solutions to \eqref{compressible_elasticity_equation_3} of the form
\begin{equation}\label{perturbation}
  \xi(y,t)=(1+t)\eta(y,t).
\end{equation}
The velocity $v$  associated with the perturbation  $\eta$ is defined as $v(y, t): = \partial_t \eta(y, t).$ From this definition, it follows that
\begin{equation}\label{perturbation_1}
v = \partial_t \eta = \frac{1}{1+t}\partial_t \xi - \frac{1}{(1+t)^2}\xi = \frac{1}{1+t} V - \frac{1}{(1+t)^2}\xi.
\end{equation}
We define the following quantities as in \eqref{flow_map_2}:
\begin{equation}\label{perturbation_1_1}
  J:={\rm det}\nabla \eta,\,\,\,\,\,\,\,\,\,\,\,\,\,\,\,\,\,\,\,\,\,A:=[(\nabla \eta)^{-1}]^{\top},\,\,\,\,\,\,\,\,\,\,\,\,\,\,\,\,\,\,\,\,\,a:=JA,\,\,\,\,\,\,\,\,\,\,\,\,\,\,\,\,\,\,\,\,\,q:=J^{-\gamma}.
\end{equation}
Combining \eqref{flow_map_2} with \eqref{perturbation} and \eqref{perturbation_1_1}, we obtain
\begin{eqnarray}\label{perturbation_2}
	\begin{cases}
		J= \frac{1}{(1+t)^d} \mathcal{J},\\
		A= (1+t)B,\\
    a= \frac{1}{(1+t)^{d-1}} b,\\
    q= (1+t)^{d\gamma} p.
	\end{cases}
  \end{eqnarray}
Substituting \eqref{perturbation}-\eqref{perturbation_2} into \eqref{compressible_elasticity_equation_3}, we obtain the following system:
\begin{eqnarray} \label{compressible_elasticity_equation_4}
	\begin{cases}
		\partial_t \eta=v,\\
		\partial_t v_{i} + \frac{2}{1+t} v_{i} + \frac{1}{(1+t)^{d(\gamma -1)+2}} a_{il}\partial_l q -\Delta \eta_{i} =0,\\
		(v,\eta)|_{t=0} = \big(v_0, \eta_{0}\big)
	\end{cases}
  \end{eqnarray}
on $\mathbb{R}^d\times (0, +\infty)$, where $v_0(y) = V_0(y)-\xi_0(y)$, $\eta_{0}(y) = \xi_{0}(y)$. Since the non-constant solution of the system \eqref{elasticity_equation_1} corresponds to the system \eqref{compressible_elasticity_equation_4} with $\overline{v}=0, \overline{\eta}= y$, we define $\widetilde{\eta} = \eta-y$. This yields the following system:
\begin{eqnarray} \label{compressible_elasticity_equation_5}
	\begin{cases}
		\partial_t \widetilde{\eta}=v,\\
		\partial_t v_{i} + \frac{2}{1+t} v_{i} + \frac{1}{(1+t)^{d(\gamma -1)+2}} a_{il}\partial_l q -\Delta \widetilde{\eta}_{i} =0,\\
		(v,\widetilde{\eta})|_{t=0} = \big(v_0, \widetilde{\eta}_0\big)
	\end{cases}
  \end{eqnarray}
  on $\mathbb{R}^d\times (0, +\infty)$.
\subsection{Local existence and uniqueness}\label{subsection_3}
In this subsection, we will establish the local well-posedness of the elasticity system \eqref{compressible_elasticity_equation_5}. Throughout the rest of this section, let C denote a generic positive constant that depends on  $M_2$ and other known constants but is independent of $m$, $T_1$ and $M_1$. The main Proposition of this section is stated as follows:
\begin{pro}\label{local_existence_1}
  Under the assumptions of Theorem \ref{global_existence_Lagrangian}, for any $M_1, M_2>0$ there exists $T_1=T_1(M_1, M_2)>0$ such that if
    \begin{eqnarray*}\label{initial_assumption_5}
            \begin{cases}
            \|v_0(y)\|_{H^3_y}^2 + \|\widetilde{\eta}_0(y)\|_{H^4_y}^2\le M_1,\\
            \frac{1}{M_2}\le{\rm det}[\nabla \widetilde{\eta}_0(y) + I]\le M_2, \ \forall y\in\mathbb{R}^d,
            \end{cases}
          \end{eqnarray*}
then  \eqref{compressible_elasticity_equation_5} admits a unique strong solution  $(v, \widetilde{\eta})(y,t)
\in L^{\infty}([0, T_1]; H^3_y(\mathbb{R}^d)\times H^4_y(\mathbb{R}^d))$.
                \end{pro}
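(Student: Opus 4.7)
The plan is to prove Proposition \ref{local_existence_1} via a standard linearization--iteration scheme, carefully tailored to the quasilinear damped hyperbolic structure of \eqref{compressible_elasticity_equation_5}.

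As a preliminary reformulation, I would first rewrite the pressure term in a way that exposes its quasilinear second-order nature in $\widetilde{\eta}$. Using Jacobi's formula $\partial_l J = J A_{mk}\partial_l\partial_k\eta_m$ together with $q=J^{-\gamma}$, we get
$$a_{il}\partial_l q = -\gamma J^{1-\gamma} A_{il} A_{mk}\partial_l\partial_k \widetilde{\eta}_m,$$
so that the momentum equation in \eqref{compressible_elasticity_equation_5} becomes
$$\partial_t v_i + \tfrac{2}{1+t}v_i - \Delta \widetilde{\eta}_i - \tfrac{\gamma}{(1+t)^{d(\gamma-1)+2}} J^{1-\gamma}A_{il}A_{mk}\partial_l\partial_k\widetilde{\eta}_m = 0.$$
The principal symbol of the spatial operator is $|\xi|^2\delta_{im}+\tfrac{\gamma}{(1+t)^{d(\gamma-1)+2}}J^{1-\gamma}A_{il}A_{mk}\xi_l\xi_k$, which is uniformly positive definite as long as $J$ stays in a compact subset of $(0,\infty)$. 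This places the system in the classical framework of quasilinear wave systems with damping.

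Next I would set up the iteration: with $(v^{(0)},\widetilde{\eta}^{(0)})=(v_0,\widetilde{\eta}_0)$, compute $J^{(n)},A^{(n)}$ from $\eta^{(n)}=\widetilde{\eta}^{(n)}+y$ and define $(v^{(n+1)},\widetilde{\eta}^{(n+1)})$ as the solution to the linear problem
\begin{eqnarray*}
\begin{cases}
\partial_t \widetilde{\eta}^{(n+1)} = v^{(n+1)},\\
\partial_t v_i^{(n+1)} + \frac{2}{1+t}v_i^{(n+1)} - \Delta \widetilde{\eta}_i^{(n+1)}\\
\qquad - \frac{\gamma}{(1+t)^{d(\gamma-1)+2}}(J^{(n)})^{1-\gamma}A_{il}^{(n)}A_{mk}^{(n)}\partial_l\partial_k\widetilde{\eta}_m^{(n+1)} = 0,\\
(v^{(n+1)},\widetilde{\eta}^{(n+1)})|_{t=0} = (v_0,\widetilde{\eta}_0).
\end{cases}
\end{eqnarray*}
Because the coefficients depend only on $\nabla\widetilde{\eta}^{(n)}$, one derivative lower than the principal unknown $\widetilde{\eta}^{(n+1)}$, the scheme is genuinely linear without derivative loss, and existence in $L^\infty([0,T_1];H^3_y\times H^4_y)$ for each $n$ follows from a standard Galerkin/mollification construction combined with the energy estimates below, provided the induction hypothesis $\tfrac{1}{2M_2}\le J^{(n)}\le 2M_2$ is maintained.

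The crux is the uniform-in-$n$ energy estimate on a common interval $[0,T_1]$. I would apply $\partial^\alpha$, $|\alpha|\le 3$, to the momentum equation, test against $\partial^\alpha v^{(n+1)}$, and track
$$E^{(n+1)}(t) = \|\widetilde{\eta}^{(n+1)}\|_{L^2}^2 + \sum_{|\alpha|\le 3}\Big(\|\partial^\alpha v^{(n+1)}\|_{L^2}^2 + \|\nabla\partial^\alpha \widetilde{\eta}^{(n+1)}\|_{L^2}^2 + \mathcal{Q}^{(n),\alpha}(t)\Big),$$
where $\mathcal{Q}^{(n),\alpha}$ is a positive-definite quadratic form in $\partial^\alpha\nabla\widetilde{\eta}^{(n+1)}$ with coefficients built from iterate $n$, designed so that its time derivative absorbs exactly the top-order pressure contribution. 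The damping $\tfrac{2}{1+t}|v|^2$ contributes nonnegatively; the Laplacian yields $\tfrac{1}{2}\partial_t\|\nabla\partial^\alpha\widetilde{\eta}^{(n+1)}\|_{L^2}^2$ after integration by parts; the remaining commutators are controlled in $H^3$ by Moser/product estimates via $H^3_y\hookrightarrow L^\infty_y$ for $d=2,3$. This yields an inequality of the form $\tfrac{d}{dt}E^{(n+1)}\le C(M_1,M_2)\bigl(1+E^{(n+1)}+E^{(n)}\bigr)^p$, which closes on a sufficiently short $T_1=T_1(M_1,M_2)$ to produce a uniform bound $E^{(n+1)}(t)\le 2 E^{(n+1)}(0)$. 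The induction hypothesis on $J^{(n+1)}$ is maintained by integrating the ODE $\partial_t J^{(n+1)} = J^{(n+1)} A_{kj}^{(n+1)}\partial_j v_k^{(n+1)}$ over $[0,T_1]$. Finally, I would show that $\{(v^{(n)},\widetilde{\eta}^{(n)})\}$ is Cauchy in the lower-regularity space $L^\infty([0,T_1];H^2_y\times H^3_y)$: consecutive differences satisfy a linear system of the same type whose forcing is controlled by the uniform $H^3\times H^4$ bound just proved; shrinking $T_1$ if necessary yields a contraction, producing a unique strong solution, with uniqueness among all $L^\infty_t(H^3_y\times H^4_y)$ solutions following from the same difference estimate. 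The principal obstacle is the construction of $\mathcal{Q}^{(n),\alpha}$: it must be positive definite uniformly in $n$ (requiring smallness of $\nabla\widetilde{\eta}^{(n)}$, inherited from the induction) and must differentiate so as to cancel the top-order pressure term, thereby avoiding the derivative loss that would occur if the pressure were placed on the right-hand side of the iteration.
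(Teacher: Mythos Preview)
Your iteration scheme is a valid route to Proposition~\ref{local_existence_1}, but it is \emph{not} the one the paper takes. The paper avoids the Picard iteration entirely: instead, it (i) regularizes the initial data to $(v_0^m,\widetilde{\eta}_0^m)\in C_c^\infty$, (ii) observes that the substitution $\widetilde{u}:=(1+t)\widetilde{\eta}^m$ turns \eqref{compressible_elasticity_equation_5} into a quasilinear wave system of the form
\[
\partial_t^2\widetilde{u}-\Delta\widetilde{u}-\gamma(1+t)^{d(1-\gamma)-2}\nabla\,\mathrm{div}\,\widetilde{u}=F(\nabla\widetilde{u},\nabla^2\widetilde{u}),
\]
and then simply \emph{cites} the Klainerman--Sideris almost-global theory to obtain a local classical solution $(v^m,\widetilde{\eta}^m)$ for each $m$; (iii) derives uniform-in-$m$ $H^3\times H^4$ bounds on a common interval $[0,T_1]$ by the same top-order energy trick you describe (the paper's analogue of your $\mathcal{Q}$ is the scalar quantity $\frac{1}{\gamma(1+t)^{d(\gamma-1)+2}}\int J^{\gamma+1}|\nabla^3 q|^2\,dy$); and (iv) passes to the limit via weak compactness and Aubin--Lions, with uniqueness handled by a difference estimate in $H^1\times L^2$.

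The trade-off is clear: the paper offloads the construction of approximate solutions to existing literature, so it never has to solve a linear problem with rough coefficients or verify contraction; your approach is more self-contained but must justify solvability of each linear step and the Cauchy property. Both rely on the same symmetric-hyperbolic energy structure. One correction to your write-up: the positivity of $\mathcal{Q}^{(n),\alpha}$ does \emph{not} require smallness of $\nabla\widetilde{\eta}^{(n)}$---the proposition explicitly allows large $M_1$---it only requires the two-sided bound on $J^{(n)}$, which you correctly propagate by integrating $\partial_t J$ over a short time. The natural choice $\mathcal{Q}^{(n),\alpha}=\frac{\gamma}{(1+t)^{d(\gamma-1)+2}}\int (J^{(n)})^{1-\gamma}|A_{mk}^{(n)}\partial_k\partial^\alpha\widetilde{\eta}_m^{(n+1)}|^2\,dy$ is merely nonnegative, but this suffices since coercivity of the full energy comes from the $\|\nabla\partial^\alpha\widetilde{\eta}^{(n+1)}\|_{L^2}^2$ supplied by the Laplacian.
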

\begin{proof}
  \subsubsection*{Step 1: Construction of approximate solutions}
  For a given $(v_0, \widetilde{\eta}_0)$ in \eqref{local_existence_1}, there exists $(v_0^m, \widetilde{\eta}_0^m)$ in $C_c^\infty(\mathbb{R}^d)$ 
  such that
  \begin{eqnarray*}\label{local_existence_2}
    \begin{cases}
      \|v_0^m - v_0\|_{H^3} + \|\widetilde{\eta}_0^m - \widetilde{\eta}_0\|_{H^4}\,\,\rightarrow 0 \,\,\,\,\,\,\,\,\,\,\,\,\,\,\,\,\,\,as\,\,\,\,\,\,\,\,\,\,\,\,\,\,m\rightarrow\infty,\\
      \|v_0^m\|_{H^3}^2 + \|\widetilde{\eta}_0^m\|_{H^4}^2\le 2M_1 \,\,\,\,\,\,\,\,\,\,\,\,\,\,\,\,\,\,\,\,\,\,\,\,\,\,\,\,\,\,\,\,\,\,\,\,\,\,\,\,\,\,\,\,\,\,\,\,\,\,\,\,\,\,\,\,\,\,\,\,\,\,\,m\ge 1,\\
      \frac{1}{2M_2}\le{\rm det}[\nabla\widetilde{\eta}_0^m +I]\le 2M_2.\,\,\,\,\,\,\,\,\,\,\,\,\,\,\,\,\,\,\,\,\,\,\,\,\,\,\,\,\,\,\,\,\,\,\,\,\,\,\,\,\,\,\,\,\,\,\,\,\,\,\,\,\,\,\,\,\,m\ge 1.
    \end{cases}
  \end{eqnarray*}
We aim to construct local classical solutions $(v^m, \widetilde{\eta}^m)$ satisfying
\begin{eqnarray} \label{local_existence_3}
	\begin{cases}
		\partial_t \widetilde{\eta}^m=v^m,\\
		\partial_t v_{i}^m+ \frac{2}{1+t} v_{i}^m+ \frac{1}{(1+t)^{d(\gamma -1)+2}} a_{il}^m\partial_l q^m-\Delta \widetilde{\eta}_{i}^m=0,\\
		(v^m,\widetilde{\eta}^m)\big|_{t=0} = \big(v_0^m, \widetilde{\eta}_0^m\big)
	\end{cases}
  \end{eqnarray}
  on $\mathbb{R}^d\times [0, T^m]$.

  Substituting  \eqref{perturbation_1_1} and \eqref{perturbation_2} into \eqref{flow_map_2} and \eqref{flow_map_5_1}, we derive
  \begin{equation}\label{local_existence_7_1}
    \begin{split}
      \nabla J^m= a_{ij}^m\partial_j\nabla \widetilde{\eta}_{i}^m,
    \end{split}
  \end{equation}
  which, together with $q^m=(J^m)^{-\gamma}$, implies that the system \eqref{local_existence_3} can be simplified as
  \begin{equation}\label{local_existence_3_1}
    \begin{split}
      \partial_t^2 \widetilde{\eta}_{i}^m+ \frac{2}{1+t} \partial_t \widetilde{\eta}_{i}^m- \frac{\gamma}{(1+t)^{d(\gamma -1)+2}}(J^m)^{-(\gamma +1)} a_{il}^ma_{js}^m\partial_s\partial_l \widetilde{\eta}_{j}^m- \Delta \widetilde{\eta}_{i}^m=0.
    \end{split}
  \end{equation}
We define
\begin{equation}\label{local_existence_3_1_1}
  \begin{split}
    \widetilde{u}: = (1+t) \widetilde{\eta}^m.
  \end{split}
\end{equation}
Combining \eqref{local_existence_3_1_1} with \eqref{perturbation_1_1}, we obtain
\begin{eqnarray} \label{local_existence_3_1_2}
	\begin{cases}
		\eta^m=\widetilde{\eta}^m+ y = \frac{1}{1+t}(\widetilde{u} + (1+t)y),\\
		J^m= {\rm det}\nabla \eta^m = \frac{1}{(1+t)^d}{\rm det}\nabla(\widetilde{u} + (1+t)y),\\
    A^m= [(\nabla \eta^m)^{-1}]^\top = (1+t)[(\nabla(\widetilde{u} + (1+t)y)^{-1}]^\top,\\
    a^m= J^mA^m= \frac{1}{(1+t)^{d-1}}{\rm det}\nabla(\widetilde{u} + (1+t)y)[(\nabla(\widetilde{u} + (1+t)y))^{-1}]^{\top}.\\
	\end{cases}
  \end{eqnarray}
Substituting  \eqref{local_existence_3_1_1} and \eqref{local_existence_3_1_2} into \eqref{local_existence_3_1}, we obtain
\begin{equation*}\label{local_existence_3_2}
  \begin{split}
    \partial_t^2 \widetilde{u}_{i} -\Delta \widetilde{u}_{i} = \,&\gamma \Big({\rm det}\nabla(\widetilde{u} + (1+t)y)\Big)^{2-(\gamma +1)}[\nabla(\widetilde{u} + (1+t)y)]^{-\top}_{il}\,\\ & [\nabla(\widetilde{u} + (1+t)y)]^{-\top}_{js} \partial_s\partial_l \widetilde{u}_{j}.
  \end{split}
\end{equation*}
This can be simplified to
\begin{equation*}\label{local_existence_3_3}
  \begin{split}
    &\partial_t^2 \widetilde{u}_{i} -\Delta \widetilde{u}_{i} - \gamma \big({\rm det}[(1+t)I]\big)^{1-\gamma} [(1+t)I]^{-\top}_{il}\, [(1+t)I]^{-\top}_{js} \partial_s\partial_l \widetilde{u}_{j}\\ &= \gamma \Big[\big({\rm det}[\nabla(\widetilde{u} + (1+t)y)]\big)^{1-\gamma}[\nabla(\widetilde{u} + (1+t)y)]^{-\top}_{il} [\nabla(\widetilde{u} + (1+t)y)]^{-\top}_{js} \\ &- \big({\rm det}[(1+t)I]\big)^{1-\gamma} [(1+t)I]^{-\top}_{il}\, [(1+t)I]^{-\top}_{js}\Big]\partial_s\partial_l \widetilde{u}_{j}.
  \end{split}
\end{equation*}
A further simplification yields
\begin{equation}\label{local_existence_3_4}
  \begin{split}
    \partial_t^2 \widetilde{u} -\Delta \widetilde{u} - \gamma (1+t)^{d(1-\gamma)-2}\nabla {\rm div} \widetilde{u}=F(\nabla\widetilde{u}, \nabla^2\widetilde{u}),
  \end{split}
\end{equation}
where $F(\nabla\widetilde{u}, \nabla^2\widetilde{u})$ is linear with respect to $\nabla^2\widetilde{u}$.
The almost global existence of classical solutions for \eqref{local_existence_3_4} follows from the framework in \cite{Klainerman_1996}, with a detailed treatment given in \cite[Section 15.3.1]{Li_2013}. Therefore, we can construct a local classical solution $(v^m, \widetilde{\eta}^m)$ to \eqref{local_existence_3} on $[0, \widetilde{T}^m]$ for some $\widetilde{T}^m>0$.
  \subsubsection*{Step 2: Uniform estimates}

By continuity, for any fixed $m>0,$ there exists $T^m\in(0,\widetilde{T}^m)$ such that for $0\le t\le T^m,$
\begin{equation}\label{local_existence_5}
  \begin{split}
   \frac{1}{4M_2}\le{\rm det}[\nabla \widetilde{\eta}^m(t)+I]\le 4M_2.
  \end{split}
\end{equation}
Furthermore, substituting  \eqref{perturbation_1_1} and \eqref{perturbation_2} into \eqref{flow_map_2} and \eqref{flow_map_5_1}-\eqref{flow_map_7}, we derive the following geometric identities:
 \begin{equation}\label{local_existence_7}
  \begin{split}
 \nabla A_{ij}^m= -A_{il}^m\partial_l\nabla\widetilde{\eta}_{k}^mA_{kj}^m,\,\,\,\,\,\,\,\,\,\,\,\,\,\nabla a_{ij}^m= a_{lk}^m\partial_k\nabla\widetilde{\eta}_{l}^mA_{ij}^m- a_{il}^m\partial_l\nabla\widetilde{\eta}_{k}^mA_{kj}^m,\,\,\,\,\,\,\,\,\,\,\,\,\,\,\partial_l a_{il}^m=0.
\end{split}
\end{equation}
 Using \eqref{local_existence_7_1}, \eqref{local_existence_5} and \eqref{local_existence_7} and $q^m= (J^m)^{-\gamma}$, we derive
 \begin{eqnarray}\label{local_existence_9_1}
   \begin{cases}
 |a^m, J^m, A^m|\le C,\\
 |\nabla (a^m, J^m, q^m)| \le C |\nabla^2 \widetilde{\eta}^m|,\\
 |\nabla^2 (a^m, J^m, q^m)| \le C |\nabla^3 \widetilde{\eta}^m| + C|\nabla^2 \widetilde{\eta}^m|^2,\\
 |\nabla^3 (a^m, J^m, q^m)|\le C|\nabla^4 \widetilde{\eta}^m| +C|\nabla^2 \widetilde{\eta}^m||\nabla^3 \widetilde{\eta}^m| +C|\nabla^2 \widetilde{\eta}^m|^3,\\
  |\partial_t q^m|\le C| \nabla v^m|,\\
  |\nabla \partial_t q^m|\le C| \nabla^2 v^m| +C|\nabla v^m||\nabla^2 \widetilde{\eta}^m|,\\
  |\nabla^2 \partial_t q^m|\le C|\nabla^3 v^m|+C|\nabla^2 v^m||\nabla^2 \widetilde{\eta}^m| +C|\nabla v^m||\nabla^3 \widetilde{\eta}^m| +C|\nabla v^m||\nabla^2 \widetilde{\eta}^m||\nabla^2 \widetilde{\eta}^m|.
 \end{cases}
  \end{eqnarray}
Applying $\nabla^s (s=0,1,2,3)$ to \eqref{local_existence_3}$_2$, multiplying the result by $\nabla^s v^m_i$, respectively, summing
over $0\le s\le 3$, and integrating the resulting equations over $\mathbb{R}^d$ and applying integration by parts, we obtain
\begin{equation}\label{local_existence_6}
   \begin{split}
    &\frac{1}{2}\udt (\|v^m\|_{H^3}^2 + \|\nabla \widetilde{\eta}^m\|_{H^3}^2) + \frac{2}{1+t}\|v^m\|_{H^3}^2 \\ &+ \frac{1}{(1+t)^{d(\gamma -1)+2}}\sum_{s=0}^3\int_{\mathbb{R}^d} \nabla^s(a_{il}^m\partial_l q^m) \cdot\nabla^s v_i^m\,dy =0.
   \end{split}
\end{equation}
In \eqref{local_existence_6}, we have
\begin{equation}\label{local_existence_8}
  \begin{split}
      &\frac{1}{(1+t)^{d(\gamma -1)+2}}\sum_{s = 0}^{3}\int_{\mathbb{R}^d} \nabla^s(a_{il}^m\partial_l q^m) \cdot\nabla^s v_i^m\,dy\\ & = \frac{1}{(1+t)^{d(\gamma -1)+2}}\int_{\mathbb{R}^d} a_{il}^m\nabla^3\partial_l q^m\cdot\nabla^3 v_i^m\,dy + R_1.
  \end{split}
\end{equation}
The term $R_1$ can be estimated by
\begin{equation}\label{local_existence_9}
  \begin{split}
    |R_1|\le &\,C \|\nabla^3 v^m\|_{L^2}(\|\nabla q^m\|_{L^\infty}\|\nabla^3 a^m\|_{L^2} + \|\nabla^2 q^m\|_{L^4}\|\nabla^2 a^m\|_{L^4} + \|\nabla a^m\|_{L^\infty}\|\nabla^3 q^m\|_{L^2})\\ & + C \|\nabla^2 v^m\|_{L^2}(\|\nabla^3 q^m\|_{L^2} + \|\nabla a^m\|_{L^\infty}\|\nabla^2 q^m\|_{L^2} + \|\nabla q^m\|_{L^\infty}\|\nabla^2 a^m\|_{L^2})\\ & + C \|\nabla v^m\|_{L^2}(\|\nabla^2 q^m\|_{L^2} +\|\nabla a^m\|_{L^\infty}\|\nabla q^m\|_{L^2}) + C \|v^m\|_{L^2}\|\nabla q^m\|_{L^2}\\\le &\,C \|v^m\|_{H^3}\|\nabla q^m\|_{H^2}(1+\|\nabla a^m\|_{H^2})\\\le &\,C \|v^m\|_{H^3}^2 + C\|\nabla q^m\|_{H^2}^2 + C\|\nabla q^m\|_{H^2}^4 + C\|\nabla a^m\|_{H^2}^4\\ \le &\,C \|v^m\|_{H^3}^2 + C\|\widetilde{\eta}^m\|_{H^4}^{12} +C,
  \end{split}
\end{equation}
where we have used the Cauchy inequality, \eqref{local_existence_9_1}, and the Sobolev inequality
\begin{equation}\label{sobolev}
	\begin{split}
		\|f\|_{L^\infty}\le\,  C \|f\|_{H^2},\,\,\,\,\,\,\,\,\,\,\,\,\,\,\,\,\,\|f\|_{L^4}\le\,  C \|f\|_{H^1}.
	\end{split}
\end{equation}
Furthermore, using \eqref{local_existence_7_1} and $q^m= (J^m)^{-\gamma}$,  we deduce
\begin{equation}\label{local_existence_10}
  \begin{split}
    \partial_t J^m= a_{ij}^m\partial_j v_i^m= -\frac{(J^m)^{\gamma +1}}{\gamma} \partial_t q^m.
  \end{split}
\end{equation}
Applying \eqref{local_existence_10} in \eqref{local_existence_8} and using integration by parts, we obtain
\begin{equation}\label{local_existence_11}
  \begin{split}
    &\frac{1}{(1+t)^{d(\gamma -1)+2}}\int_{\mathbb{R}^d} a_{il}^m\nabla^3\partial_l q^m\cdot\nabla^3 v_i^m\,dy\\ & = - \frac{1}{(1+t)^{d(\gamma -1)+2}}\int_{\mathbb{R}^d} a_{il}^m\nabla^3 q^m\cdot\partial_l\nabla^3 v_i^m\,dy\\ &  = -\frac{1}{(1+t)^{d(\gamma -1)+2}}\Big(\int_{\mathbb{R}^d} \nabla^3 q^m\cdot\nabla^3\partial_t J^m\,dy- \int_{\mathbb{R}^d} \nabla^3 q^m\cdot\nabla^3 a_{il}^m\partial_l v_i^m\,dy  \\ &- 3\int_{\mathbb{R}^d} \nabla^3 q^m\cdot\big(\nabla^2 a_{il}^m\nabla\partial_l v_i^m\big) \,dy -  3\int_{\mathbb{R}^d} \nabla^3 q^m\cdot\big(\nabla a_{il}^m\nabla^2\partial_l v_i^m\big) \,dy\Big) \\ & =: \,R_2 + R_3 + R_4 + R_5.
  \end{split}
\end{equation}
In \eqref{local_existence_11}, using \eqref{local_existence_10}, we obtain
\begin{equation}\label{local_existence_12}
  \begin{split}
    R_2 = &\frac{1}{\gamma (1+t)^{d(\gamma -1)+2}}\int_{\mathbb{R}^d}\nabla^3 q^m\cdot\nabla^3\big[(J^m)^{\gamma +1}\partial_t q^m\big] \,dy\\  = &\frac{1}{\gamma (1+t)^{d(\gamma -1)+2}}\Big[\int_{\mathbb{R}^d} (J^m)^{\gamma +1}\nabla^3 q^m\cdot \nabla^3\partial_t q^m\,dy + \int_{\mathbb{R}^d} \nabla^3 q^m\cdot\nabla^3(J^m)^{\gamma +1}\partial_t q^m\,dy\\ & + 3\int_{\mathbb{R}^d} \nabla^3 q^m\cdot\big(\nabla^2(J^m)^{\gamma +1}\nabla\partial_t q^m\big) \,dy+ 3\int_{\mathbb{R}^d} \nabla^3 q^m\cdot\big(\nabla (J^m)^{\gamma +1}\nabla^2\partial_tq^m\big) \,dy\Big] \\ =&:R_6 + R_7 + R_8 + R_9.
  \end{split}
\end{equation}
For $R_6$ in \eqref{local_existence_12}, we have
\begin{equation}\label{local_existence_13}
  \begin{split}
    R_6 = &\,\frac{1}{2\gamma (1+t)^{d(\gamma -1)+2}}\udt\int_{\mathbb{R}^d} (J^m)^{\gamma +1}|\nabla^3 q^m|^2 \,dy - \frac{(\gamma +1)}{2\gamma (1+t)^{d(\gamma -1)+2}}\int_{\mathbb{R}^d} (J^m)^{\gamma}\\ &\partial_t J^m|\nabla^3 q^m|^2 \,dy\\  = &\,\udt\Big[\frac{1}{2\gamma (1+t)^{d(\gamma -1)+2}}\int_{\mathbb{R}^d} (J^m)^{\gamma +1}|\nabla^3 q^m|^2 \,dy\Big] + \frac{d(\gamma -1)+2}{2\gamma (1+t)^{d(\gamma -1)+3}}\\ &\int_{\mathbb{R}^d} (J^m)^{\gamma +1}|\nabla^3 q^m|^2 dy- \frac{(\gamma +1)}{2\gamma (1+t)^{d(\gamma -1)+2}}\int_{\mathbb{R}^d} (J^m)^{\gamma}\partial_t J^m|\nabla^3 q^m|^2 \,dy \\ =&: R_{10} + R_{11} + R_{12}.
  \end{split}
\end{equation}
 Using \eqref{local_existence_7_1}, \eqref{local_existence_7}-\eqref{local_existence_9_1}, \eqref{sobolev} and the Cauchy inequality, the terms $R_3, R_4, R_5$, $R_7, R_8, R_9$ and $R_{12}$ from \eqref{local_existence_11}-\eqref{local_existence_13} can be estimated as
\begin{equation}\label{local_existence_14}
  \begin{split}
    &|R_3| + |R_4| + |R_5| + |R_7| + |R_8| + |R_9| + |R_{12}|\\ &\le C \|v^m\|_{H^3}\|\nabla q^m\|_{H^2}\|\nabla a^m\|_{H^2} + C\|\nabla q^m\|_{H^2}\|\partial_t q^m\|_{H^2}(\|\nabla^3 J^m\|_{L^2} \\ &+ \|\nabla^2 J^m\|_{L^4}\|\nabla J^m\|_{L^\infty} +\|\nabla J^m\|_{L^\infty}^3 +1)  + C\|\nabla q^m\|_{H^2}^2\|\nabla v^m\|_{H^2}\\ & \le C\|v^m\|_{H^3}^2 +C\|\nabla q^m\|_{H^2}^4 + C\|\nabla a^m\|_{H^2}^4 + C\|\nabla q^m\|_{H^2}^2 + C\|\partial_t q^m\|_{H^2}^2 + C\|\partial_t q^m\|_{H^2}^4 \\ &+ C\|\nabla^3 J^m\|_{L^2}^4 + C\|\nabla^2 J^m\|_{L^4}^4\|\nabla J^m\|_{L^\infty}^4 + C\|\nabla J^m\|_{L^\infty}^{12}\\ &\le C \|v^m\|_{H^3}^8 + C\|\widetilde{\eta}^m\|_{H^4}^{16} +C.
  \end{split}
\end{equation}
Substituting \eqref{local_existence_8}, \eqref{local_existence_9}, \eqref{local_existence_11}-\eqref{local_existence_14} into \eqref{local_existence_6}, we obtain
\begin{equation}\label{local_existence_15}
  \begin{split}
    &\frac{1}{2}\udt \big[\|v^m\|_{H^3}^2 + \|\nabla \widetilde{\eta}^m\|_{H^3}^2 + \frac{1}{\gamma (1+t)^{d(\gamma -1)+2}} \int_{\mathbb{R}^d}(J^m)^{\gamma +1}|\nabla^3 q^m|^2 \,dy\big]\\ &\le C \|v^m\|_{H^3}^8 + C\|\widetilde{\eta}^m\|_{H^4}^{16} +C.
  \end{split}
\end{equation}
Multiplying \eqref{local_existence_3}$_1$ by $\widetilde{\eta}_i^m$ respectively,  
integrating the resulting equations and applying the {H}\"{o}lder inequality, we have
\begin{equation}\label{local_existence_16}
   \begin{split}
    \frac{1}{2}\udt \|\widetilde{\eta}^m\|_{L^2}^2 = \langle v_i^m, \widetilde{\eta}_i^m\rangle\le C\|v^m\|_{L^2}\|\widetilde{\eta}^m\|_{L^2}.
   \end{split}
\end{equation}
Combining \eqref{local_existence_15} with \eqref{local_existence_16}, we derive
\begin{equation}\label{local_existence_17}
  \begin{split}
    &\frac{1}{2}\udt \big[\|v^m\|_{H^3}^2 + \|\widetilde{\eta}^m\|_{H^4}^2 + \frac{1}{\gamma (1+t)^{d(\gamma -1)+2}} \int_{\mathbb{R}^d}(J^m)^{\gamma +1}|\nabla^3 q^m|^2 \,dy\big]\\ &\le C \|v^m\|_{H^3}^8 + C\|\widetilde{\eta}^m\|_{H^4}^{16} +C.
  \end{split}
\end{equation}
Integrating \eqref{local_existence_17} over $[0, t]$ for any $t\in[0,T^m]$, then taking the supremum over $t\in[0,T^m]$, we deduce
   \begin{equation*}\label{local_estimate_18}
	   \begin{split}
		&\sup_{0\leq t \leq T^m}\big(\|v^m(t)\|_{H^3}^2 + \|\widetilde{\eta}^m(t)\|_{H^4}^2\big) \le CM_1 + C T^m\sup_{0\leq t \leq T^m}(\|v^m(t)\|_{H^3}^8+\|\widetilde{\eta}^m(t)\|_{H^4}^{16} + 1).
	   \end{split}
   \end{equation*}
As in \cite[Section 9]{Coutand_2006}, the above inequalities provide a time $T_1\le T^m$ independent of $m$ such that
\begin{equation*}\label{local_estimate_20}
  \begin{split}
     \sup_{0\leq t \leq T_1}\big(\|v^m(t)\|_{H^3}^2 + \|\widetilde{\eta}^m(t)\|_{H^4}^2\big) \le CM_1.
  \end{split}
\end{equation*}
\subsubsection*{Step 3: Local existence and uniqueness}
By standard weak (or weak*) convergence results and the Aubin-Lions Lemma (see, for instance, \cite{Simon_1987}), there exists a limit $(v, \widetilde{\eta})$ in $C([0, T_1]; H^3(\mathbb{R}^d)\times H^4(\mathbb{R}^d))$ of $(v^m, \widetilde{\eta}^m)$ (taking a subsequence if necessary) that solves the Cauchy problem \eqref{compressible_elasticity_equation_5}. In fact, the continuity in time of the limit can be achieved in a way similar to that in Section 6.3 of \cite{Li_2013}, and the limit satisfies
\begin{equation}\label{local_estimate_25}
  \begin{split}
     \sup_{0\leq t \leq T_1}\big(\|v(t)\|_{H^3}^2 + \|\widetilde{\eta}(t)\|_{H^4}^2\big) \le CM_1.
  \end{split}
\end{equation}
For uniqueness, suppose there are two solutions $(v, \widetilde{\eta})$ and $(w, \phi)$ to the system \eqref{compressible_elasticity_equation_5}. Let $\overline{v}: = v - w,$ $\overline{\eta}: = \widetilde{\eta} - \phi$, which satisfy
\begin{eqnarray} \label{local_estimate_21}
	\begin{cases}
		\partial_t \overline{\eta}=\overline{v},\\
		\partial_t \overline{v}_{i} + \frac{2}{1+t} \overline{v}_{i} + \frac{1}{(1+t)^{d(\gamma -1)+2}} \Big(a_{il}\partial_l q(\widetilde{\eta})- a_{il}\partial_l q(\phi)\Big) -\Delta \overline{\eta}_{i} =0.
	\end{cases}
  \end{eqnarray}
Multiplying \eqref{local_estimate_21}$_1$ and \eqref{local_estimate_21}$_2$ by $(\overline{\eta}, \overline{v}_{i})$, respectively, 
and integrating the resulting equations over $\mathbb{R}^d$, we obtain that for all $0\le t\le T_1,$
\begin{equation}\label{local_estimate_22}
  \begin{split}
    &\frac{1}{2}\udt \Big(\|\overline{\eta}\|_{H^1}^2 + \|\overline{v}\|_{L^2}^2 \Big) - \frac{\gamma}{(1+t)^{d(\gamma -1)+2}}\int_{\mathbb{R}^d}\Big[\frac{A_{il}A_{sj}}{J^{\gamma - 1}}(\widetilde{\eta})\partial_j\partial_l\widetilde{\eta}_s \\ &- \frac{A_{il}A_{sj}}{J^{\gamma - 1}}(\phi)\partial_j\partial_l\phi_s\Big] \overline{v}_{i}\,dy \le \, C\Big(\|\overline{\eta}\|_{H^1}^2 + \|\overline{v}\|_{L^2}^2\Big).
  \end{split}
\end{equation}
For the second term on the left-hand side of the inequality, using integration by parts, we obtain
\begin{equation}\label{local_estimate_26}
  \begin{split}
 & - \frac{\gamma}{(1+t)^{d(\gamma -1)+2}}\int_{\mathbb{R}^d}\Big[\frac{A_{il}A_{sj}}{J^{\gamma - 1}}(\widetilde{\eta})\partial_j\partial_l\widetilde{\eta}_s- \frac{A_{il}A_{sj}}{J^{\gamma - 1}}(\phi)\partial_j\partial_l\phi_s\Big] \overline{v}_{i}\,dy\\
   &=  - \frac{\gamma}{(1+t)^{d(\gamma -1)+2}}\int_{\mathbb{R}^d}\frac{A_{il}A_{sj}}{J^{\gamma - 1}}(\widetilde{\eta})\partial_j\partial_l\overline{\eta}_s \overline{v}_{i} + \Big(\frac{A_{il}A_{sj}}{J^{\gamma - 1}}(\widetilde{\eta}) - \frac{A_{il}A_{sj}}{J^{\gamma - 1}}(\phi)\Big)\partial_j\partial_l\phi_s \overline{v}_{i}\,dy\\
    & = \frac{\gamma}{(1+t)^{d(\gamma -1)+2}}\Big[\int_{\mathbb{R}^d}\frac{A_{il}(\widetilde{\eta})\partial_l\overline{v}_{i} A_{sj}(\widetilde{\eta})\partial_j\overline{\eta}_s}{J^{\gamma - 1}(\widetilde{\eta})} \,dy  +\int_{\mathbb{R}^d}\partial_l\Big(\frac{A_{il}(\widetilde{\eta})A_{sj}(\widetilde{\eta})}{J^{\gamma - 1}(\widetilde{\eta})}\Big) \overline{v}_{i}\partial_j\overline{\eta}_s\,dy \\ 
     &- \Big(\frac{A_{il}A_{sj}}{J^{\gamma - 1}}(\widetilde{\eta}) - \frac{A_{il}A_{sj}}{J^{\gamma - 1}}(\phi)\Big)\partial_j\partial_l\phi_s \overline{v}_{i}\,dy\Big]\\ & =: R_{13} + R_{14} + R_{15}.
  \end{split}
\end{equation}
As for $R_{13}$ in \eqref{local_estimate_26}, we deduce
\begin{equation}\label{local_estimate_27}
  \begin{split}
    R_{13} = &\frac{\gamma}{(1+t)^{d(\gamma -1)+2}}\int_{\mathbb{R}^d}\frac{A_{il}(\widetilde{\eta})\partial_l\overline{v}_{i} A_{sj}(\widetilde{\eta})\partial_j\overline{\eta}_s}{J^{\gamma - 1}(\widetilde{\eta})} \,dy \\
    =& \frac{\gamma}{(1+t)^{d(\gamma -1)+2}}\int_{\mathbb{R}^d}\frac{\partial_t\big(A_{il}(\widetilde{\eta})\partial_l\overline{\eta}_{i}\big) A_{sj}(\widetilde{\eta})\partial_j\overline{\eta}_s}{J^{\gamma - 1}(\widetilde{\eta})}  - \frac{\partial_tA_{il}(\widetilde{\eta})\partial_l\overline{\eta}_{i} A_{sj}(\widetilde{\eta})\partial_j\overline{\eta}_s}{J^{\gamma - 1}(\widetilde{\eta})}\,dy\\
    =&
    \frac{1}{2} \udt\Big[\frac{\gamma}{(1+t)^{d(\gamma -1)+2}}\int_{\mathbb{R}^d}\frac{|A_{jk}(\widetilde{\eta})\partial_k \overline{\eta}_i|^2}{J^{\gamma - 1}(\widetilde{\eta})}\,dy\Big] \\
    +& \frac{\gamma(d(\gamma -1)+2)}{2(1+t)^{d(\gamma -1)+3}}\int_{\mathbb{R}^d}\frac{|A_{jk}(\widetilde{\eta})\partial_k \overline{\eta}_i|^2}{J^{\gamma-1}(\widetilde{\eta})}\,dy\\
   - &\frac{\gamma}{2(1+t)^{d(\gamma -1)+2}}\Big[\int_{\mathbb{R}^d}\partial_t\big(\frac{1}{J^{\gamma - 1}(\widetilde{\eta})}\big)A_{il}(\widetilde{\eta})\partial_l\overline{\eta}_{i} A_{sj}(\widetilde{\eta})\partial_j\overline{\eta}_s + \frac{\partial_tA_{il}(\widetilde{\eta})\partial_l\overline{\eta}_{i} A_{sj}(\widetilde{\eta})\partial_j\overline{\eta}_s}{J^{\gamma - 1}(\widetilde{\eta})}\,dy\Big] \\
   =:& R_{16} + R_{17} + R_{18} + R_{19}.
  \end{split}
\end{equation}
By applying the Sobolev inequalities in \eqref{sobolev} and \eqref{local_estimate_25}, we obtain the following estimates for the terms  
$R_{14}$, $R_{15}$, $R_{18}$, and $R_{19}$:
\begin{equation}\label{local_estimate_28}
  \begin{split}
|R_{14}| + |R_{15}| + |R_{18}| +|R_{19}|\le C(M_1)(\|\overline{\eta}\|_{H^1}^2 + \|\overline{v}\|_{L^2}^2).
\end{split}
\end{equation}
Combining \eqref{local_estimate_22} with \eqref{local_estimate_26}-\eqref{local_estimate_28}, we deduce
\begin{equation}\label{local_estimate_23}
  \begin{split}
    &\udt\Big(\|\overline{\eta}\|_{H^1}^2 + \|\overline{v}\|_{L^2}^2 + \frac{\gamma}{(1+t)^{d(\gamma -1)+2}}\int_{\mathbb{R}^d}\frac{|A_{jk}(\widetilde{\eta})\partial_k \overline{\eta}_i|^2}{J^{\gamma - 1}(\widetilde{\eta})}\,dy\Big)\\ 
    &\le C(M_1)(\|\overline{\eta}\|_{H^1}^2 + \|\overline{v}\|_{L^2}^2).
  \end{split}
\end{equation}
By applying Gronwall's inequality to \eqref{local_estimate_23}, we obtain
\begin{equation}\label{local_estimate_24}
  \begin{split}
    \|\overline{\eta}(t)\|_{H^1}^2 + \|\overline{v}(t)\|_{L^2}^2 \le C(M_1)(\|\overline{\eta}(0)\|_{H^1}^2 + \|\overline{v}(0)\|_{L^2}^2)= 0.
  \end{split}
\end{equation}
Thus, $\overline{v} = v - w =0$ and $\overline{\eta} = \widetilde{\eta} - \phi=0,$ which completes the proof of Proposition \ref{local_existence_1}.
\end{proof}
\subsection{A priori estimates}\label{subsection_4}
In this subsection, we will establish apriori estimates for the strong solutions to the system \eqref{compressible_elasticity_equation_5}. We assume that $T^{\ast}> 0$ is the maximum existence time for the solution to the Cauchy problem \eqref{compressible_elasticity_equation_5}. To begin with, we define
 \begin{equation}\label{L}
	\begin{split}
    \mathcal{L}(t) := &\|v(t)\|_{H^3}^2 + \frac{2}{(1+t)^2}\|\widetilde{\eta}(t)\|_{H^3}^2 + \|\nabla \widetilde{\eta}(t)\|_{H^3}^2+ \frac{2}{1+t}\sum_{i = 0}^{3} \langle\nabla^{i} v(t),\nabla^{i} \widetilde{\eta}(t)\rangle \\ &+ \frac{1}{\gamma(1+t)^{d(\gamma -1)+2}} \int_{\mathbb{R}^d}J^{\gamma+1}(t) |\nabla^3 q(t)|^2 \,dy.
	\end{split}
\end{equation}
Throughout the rest of this section, $C\ge 1$  denotes a generic positive constant depending on some known constants but independent of $\varepsilon_1, \delta, T^*$,  and $\bar{C_1}\ge C$ depends only on $C$. The main Proposition of this subsection is 
the following.
\begin{pro}\label{uniform_estimates}
 Under the conditions of Theorem \ref{global_existence_Lagrangian}, there exist sufficiently small positive constants $\varepsilon_1$, $\delta$, that are  independent of $T$,  such that if
    \begin{equation}\label{a_priori_hypothesis}
      \sup_{0\leq t \leq T}(1+t)\mathcal{L}(t) \leq \delta
    \end{equation}
    for any given $0<T<T^\ast$, then
    \begin{equation}\label{a_priori_hypothesis_1}
      \sup_{0\leq t \leq T}(1+t)\mathcal{L}(t) \leq \frac{\delta}{2},
    \end{equation}
  provided that
   \begin{equation*}\label{initial_data}
    \|v_0(y)\|_{H^3_y} + \|\widetilde{\eta}_0(y)\|_{H^4_y}\le \varepsilon_1,
   \end{equation*}
where $\varepsilon_1, \delta$ satisfy the following  conditions:
\begin{equation}\label{coefficients_conditions}
	\begin{split}
     {\delta = 4\bar{C_1} \varepsilon_1^2\le\frac{1}{2\bar{C_1}}.}
  \end{split}
\end{equation}
\end{pro}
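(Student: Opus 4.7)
The statement is a bootstrap/continuation improvement: under the a priori hypothesis \eqref{a_priori_hypothesis}, one wants to recover the same bound with an extra factor~$1/2$, which combined with Proposition~\ref{local_existence_1} yields the global-in-time solution in Theorem~\ref{global_existence_Lagrangian}. The plan is to derive a master differential inequality of the form
\begin{equation*}
\frac{d}{dt}\mathcal{L}(t)+\frac{c_0}{1+t}\mathcal{L}(t)\le \frac{C\,\mathcal{L}(t)^{3/2}}{(1+t)^{\alpha}}
\end{equation*}
for some $c_0\ge 2$ and $\alpha>0$. Before differentiating, note that by Cauchy-Schwarz applied to the cross term $\tfrac{2}{1+t}\sum_i\langle\nabla^iv,\nabla^i\widetilde\eta\rangle$, the quantity $\mathcal{L}(t)$ is equivalent (up to a factor close to $1$) to $\|v\|_{H^3}^2+(1+t)^{-2}\|\widetilde\eta\|_{H^3}^2+\|\nabla\widetilde\eta\|_{H^3}^2$ plus the pressure piece, so any bound on $\mathcal{L}$ gives the desired norm control.

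I would first apply $\nabla^s$, $s=0,1,2,3$, to \eqref{compressible_elasticity_equation_5}$_2$ and pair with $\nabla^s v$ exactly as in the proof of Proposition~\ref{local_existence_1}. The friction term contributes a damping $-\tfrac{4}{1+t}\|v\|_{H^3}^2$, and the high-order pressure term is handled with the identity $\partial_t J=a:\nabla v$ and $q=J^{-\gamma}$ (see \eqref{local_existence_10}-\eqref{local_existence_12}) to produce the perfect time derivative $\frac{d}{dt}\big[\tfrac{1}{\gamma(1+t)^{d(\gamma-1)+2}}\!\int J^{\gamma+1}|\nabla^3 q|^2\big]$ together with the positive damping $\tfrac{d(\gamma-1)+2}{2\gamma(1+t)}$ times the pressure piece of $\mathcal{L}$. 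The purpose of the cross terms $\tfrac{2}{1+t}\langle\nabla^iv,\nabla^i\widetilde\eta\rangle$ is to manufacture a damping for $\|\nabla\widetilde\eta\|_{H^3}^2$: differentiating in $t$, using $\partial_t\widetilde\eta=v$ and substituting the momentum equation for $\partial_tv$, integration by parts turns $\tfrac{2}{1+t}\langle\nabla^i\Delta\widetilde\eta,\nabla^i\widetilde\eta\rangle$ into $-\tfrac{2}{1+t}\|\nabla^{i+1}\widetilde\eta\|^2$. The leftover $\langle v,\widetilde\eta\rangle$ contributions coming from $\frac{d}{dt}[\tfrac{2}{(1+t)^2}\|\widetilde\eta\|_{H^3}^2]$, from differentiating the prefactor $\tfrac{2}{1+t}$, and from the friction term in $\partial_tv$ either cancel algebraically or can be absorbed by Cauchy-Schwarz into the just-obtained damping of $v$ and $(1+t)^{-1}\widetilde\eta$.

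All nonlinear commutators reduce, via the product rule estimates \eqref{local_existence_9_1} and the Sobolev embeddings \eqref{sobolev}, to terms of the form $C(1+t)^{-\alpha}\mathcal{L}(t)^{3/2}$. Here the bootstrap hypothesis \eqref{a_priori_hypothesis} is used to convert the pointwise smallness $\|\nabla\widetilde\eta\|_{L^\infty}+|J-1|+\|a-I\|+|q-1|\le C\sqrt{\delta}\,(1+t)^{-1/2}$ into the extra $(1+t)^{-\alpha}$ decay in each error term. Applying Gronwall to the master inequality with $c_0\ge 2$ and the initial bound $\mathcal{L}(0)\le C\varepsilon_1^2$ produces
\begin{equation*}
(1+t)\mathcal{L}(t)\le C\varepsilon_1^2+C\delta^{3/2}.
\end{equation*}
Absorbing $C$ into $\bar{C_1}$, the choice $\delta=4\bar{C_1}\varepsilon_1^2$ gives $C\varepsilon_1^2\le\tfrac{\delta}{4}$, while the smallness $\delta\le 1/(2\bar{C_1})$ in \eqref{coefficients_conditions} forces $C\delta^{3/2}\le\tfrac{\delta}{4}$, so that $(1+t)\mathcal{L}(t)\le\tfrac{\delta}{2}$ as required.

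The main obstacle is the exact accounting of temporal weights in the high-order commutators. The weights $1$, $(1+t)^{-2}$, $1$, $(1+t)^{-1}$, $(1+t)^{-(d(\gamma-1)+2)}$ attached to the five pieces of $\mathcal{L}(t)$ in \eqref{L} are chosen precisely so that every error term, when rewritten using the bootstrap rate $\mathcal{L}\le\delta/(1+t)$, carries a net factor $(1+t)^{-\alpha}$ with $\alpha>0$ (preferably with $\alpha>1/2$ so that integration in $t$ retains decay). The delicate case is the top-order pressure error at $s=3$, where only the time-derivative trick of \eqref{local_existence_10}-\eqref{local_existence_12} averts a loss of one derivative of $v$; the remainder must then be absorbed into the damping generated jointly by the friction term and the cross-term-induced $-\tfrac{2}{1+t}\|\nabla\widetilde\eta\|_{H^3}^2$. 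Making all these weight matches explicit, uniformly in $\gamma>1$ and $d\in\{2,3\}$, is the technical heart of the argument.
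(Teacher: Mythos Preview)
Your strategy is precisely the paper's: derive a master inequality for $\mathcal{L}$, then bootstrap. The paper reaches exactly
\[
\frac{d}{dt}\mathcal{L}(t)+\frac{1}{1+t}\mathcal{L}(t)\le \frac{C}{(1+t)^{2}}\big(\mathcal{L}(t)+\mathcal{L}(t)^{7/2}\big),
\]
multiplies by $(1+t)e^{C/(1+t)}$, and integrates. Two points in your sketch do not survive the computation, however.

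First, the damping constant is $c_0=1$, not $c_0\ge 2$. Your claim of a $-\tfrac{4}{1+t}\|v\|_{H^3}^2$ friction term is off by a factor of two (the friction is $\tfrac{2}{1+t}v$, so pairing with $v$ gives $\tfrac{2}{1+t}\|v\|^2$); once the cross-term test with $\tfrac{1}{1+t}\widetilde\eta$ subtracts $\tfrac{1}{1+t}\|v\|^2$, the net damping on every piece of $\mathcal{L}$ is exactly $\tfrac{1}{1+t}$. You cannot squeeze $c_0=2$ out of the algebra: the sign-indefinite remainder $\tfrac{2}{(1+t)^2}\langle v,\widetilde\eta\rangle$ blocks it.

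Second, and more important, not every error is superlinear. The pressure term $\tfrac{1}{(1+t)^{d(\gamma-1)+2}}a_{il}\partial_l q$ is \emph{linear} in $\widetilde\eta$ at leading order (since $q=J^{-\gamma}\approx 1-\gamma\,\mathrm{div}\,\widetilde\eta$), so at orders $s\le 2$ and in the cross-term estimate it is simply bounded by $\tfrac{C}{(1+t)^{d(\gamma-1)+2}}\|\nabla\widetilde\eta\|_{H^3}\|v\|_{H^3}\le \tfrac{C}{(1+t)^2}\mathcal{L}$; see \eqref{est_L2_3}--\eqref{est_H2_3} and \eqref{est_H3_12}. This linear term cannot be absorbed into the damping for small $t$, and since $c_0=1$ the weight $(1+t)$ leaves no slack. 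The paper resolves this with the bounded integrating factor $e^{C/(1+t)}$ in \eqref{est_L_3}, after which the superlinear remainder $\mathcal{L}^{7/2}$ is handled exactly as you describe. Your final Gronwall step would need this amendment to close.
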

The proof of Proposition \ref{uniform_estimates} relies on the following Lemmas \ref{lemma_regularity_0}-\ref{lemma_regularity_3}.
 \begin{lemma}\label{lemma_regularity_0}
	Under the assumptions of Proposition \ref{uniform_estimates}, there exists a positive constant $C$, that is independent of $T$, such that
	\begin{equation*}\label{est_L2_1}
	   \begin{split}
		    &\frac{1}{2}\udt \Big[\|v(t)\|_{H^2}^2 + \frac{2}{(1+t)^2}\|\widetilde{\eta}(t)\|_{H^2}^2 + \|\nabla \widetilde{\eta}(t)\|_{H^2}^2+ \frac{2}{1+t}\sum_{i = 0}^{2} \langle\nabla^i v(t),\nabla^i\widetilde{\eta}(t)\rangle \Big]\\  &+ \frac{1}{1+t}\|v(t)\|_{H^2}^2 + \frac{2}{(1+t)^3}\|\widetilde{\eta}(t)\|_{H^2}^2  + \frac{1}{1+t}\|\nabla \widetilde{\eta}(t)\|_{H^2}^2 + \frac{1}{(1+t)^2}\sum_{i = 0}^{2}\langle \nabla^i v(t),\nabla^i \widetilde{\eta}(t)\rangle\\ &\le \frac{C}{(1+t)^{2}}\big(\mathcal{L}(t) + \mathcal{L}^{\frac{3}{2}}(t)\big)
	   \end{split}
   \end{equation*}
   for all $0\leq t \leq T$.
\end{lemma}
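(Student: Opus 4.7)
The plan is to combine three elementary energy identities for each order $i=0,1,2$ and then sum, so that cross terms rearrange into the structure appearing on the left-hand side of the statement. Introduce the level-$i$ energy
$E_i(t):=\tfrac12\|\nabla^i v\|_{L^2}^2+\tfrac{1}{(1+t)^2}\|\nabla^i\widetilde\eta\|_{L^2}^2+\tfrac12\|\nabla^{i+1}\widetilde\eta\|_{L^2}^2+\tfrac{1}{1+t}\langle\nabla^i v,\nabla^i\widetilde\eta\rangle$,
so that $\sum_{i=0}^{2}E_i$ equals exactly one half of the bracketed energy appearing inside $\tfrac{d}{dt}$ in the statement. For each $i$, I would compute $\tfrac{d}{dt}E_i$ via three ingredients: (a) applying $\nabla^i$ to \eqref{compressible_elasticity_equation_5}$_2$ and pairing with $\nabla^i v$ (integrating by parts in $\langle\nabla^i\Delta\widetilde\eta,\nabla^i v\rangle$ and using $\partial_t\widetilde\eta=v$ absorbs $\tfrac{d}{dt}\tfrac12\|\nabla^{i+1}\widetilde\eta\|_{L^2}^2$); (b) differentiating the coupling $\tfrac{1}{1+t}\langle\nabla^i v,\nabla^i\widetilde\eta\rangle$ in time and inserting $\partial_tv$ from the momentum equation; (c) differentiating $\tfrac{1}{(1+t)^2}\|\nabla^i\widetilde\eta\|_{L^2}^2$ using $\partial_t\widetilde\eta=v$.

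Ingredient (a) yields the damping $\tfrac{2}{1+t}\|\nabla^i v\|_{L^2}^2$ and the pressure-coupling
$I^{(1)}_i:=\tfrac{1}{(1+t)^{d(\gamma-1)+2}}\langle\nabla^i(a_{il}\partial_l q),\nabla^i v_i\rangle$.
Ingredient (b) produces the four terms $-\tfrac{1}{(1+t)^2}\langle\nabla^i v,\nabla^i\widetilde\eta\rangle$ (from the $t$-derivative of $\tfrac{1}{1+t}$), $+\tfrac{1}{1+t}\|\nabla^i v\|_{L^2}^2$ (from $\partial_t\widetilde\eta=v$), $-\tfrac{2}{(1+t)^2}\langle\nabla^i v,\nabla^i\widetilde\eta\rangle$ and $+\tfrac{1}{1+t}\|\nabla^{i+1}\widetilde\eta\|_{L^2}^2$ (after integrating by parts in $\langle\nabla^i\Delta\widetilde\eta,\nabla^i\widetilde\eta\rangle$), together with the pressure-coupling
$I^{(2)}_i:=\tfrac{1}{(1+t)^{d(\gamma-1)+3}}\langle\nabla^i(a_{il}\partial_l q),\nabla^i\widetilde\eta\rangle$.
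Ingredient (c) yields $-\tfrac{2}{(1+t)^3}\|\nabla^i\widetilde\eta\|_{L^2}^2+\tfrac{2}{(1+t)^2}\langle\nabla^i v,\nabla^i\widetilde\eta\rangle$. Adding the three ingredients for each $i$ and summing over $i=0,1,2$, the four $\langle\nabla^i v,\nabla^i\widetilde\eta\rangle$ contributions combine to $-\tfrac{1}{(1+t)^2}\sum_{i=0}^{2}\langle\nabla^i v,\nabla^i\widetilde\eta\rangle$ (moved to the left as the claimed coupling dissipation), and the $\pm\tfrac{1}{1+t}\|\nabla^i v\|_{L^2}^2$ terms combine to the net dissipation $\tfrac{1}{1+t}\|v\|_{H^2}^2$; together with $\tfrac{1}{1+t}\|\nabla\widetilde\eta\|_{H^2}^2$ and $\tfrac{2}{(1+t)^3}\|\widetilde\eta\|_{H^2}^2$, this exactly reproduces the dissipation structure in the statement, while the right-hand side is $-\sum_{i=0}^2(I^{(1)}_i+I^{(2)}_i)$.

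It remains to bound $\sum_{i=0}^2(|I^{(1)}_i|+|I^{(2)}_i|)$ by $\tfrac{C}{(1+t)^2}(\mathcal{L}+\mathcal{L}^{3/2})$. Since $d(\gamma-1)\ge 0$, the temporal prefactors are already at worst $(1+t)^{-2}$. For $i=0,1$, Hölder's inequality combined with the pointwise geometric bounds on $|\nabla^\ell(a,q)|$ from \eqref{local_existence_9_1} and the Sobolev embedding \eqref{sobolev} gives contributions of the form $\|\nabla q\|_{H^1}\|v\|_{H^1}\lesssim \mathcal{L}$ at linear order, while the quadratic $|\nabla^2\widetilde\eta|^2$-type contributions inside $|\nabla^2 q|$ yield the $\mathcal{L}^{3/2}$ component. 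The principal obstacle is $i=2$: the term $\langle\nabla^2(a_{il}\partial_l q),\nabla^2 v_i\rangle$ formally involves $\nabla^3 q$ paired with $\nabla^2 v$, with no spare derivative. I would invoke the Piola identity $\partial_l a_{il}=0$ to integrate by parts, rewriting the principal part as $-\langle a_{il}\nabla^2 q,\partial_l\nabla^2 v_i\rangle$ plus strictly lower-order commutators, and then bound it via $\|a\|_{L^\infty}\|\nabla^2 q\|_{L^2}\|\nabla^3 v\|_{L^2}$, using $\|\nabla^2 q\|_{L^2}\lesssim \|\nabla^3\widetilde\eta\|_{L^2}+\|\nabla^2\widetilde\eta\|_{L^4}^2\lesssim \mathcal{L}^{1/2}+\mathcal{L}$ and $\|\nabla^3 v\|_{L^2}\lesssim\mathcal{L}^{1/2}$. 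The $I^{(2)}_i$ terms, which enjoy an extra factor of $(1+t)^{-1}$, and all the commutators are handled identically. Combining these bounds with the a priori hypothesis \eqref{a_priori_hypothesis} yields the required $\tfrac{C}{(1+t)^{2}}(\mathcal{L}+\mathcal{L}^{3/2})$ bound and closes the lemma.
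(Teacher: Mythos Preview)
Your approach is essentially identical to the paper's: they pair $\nabla^k$\eqref{compressible_elasticity_equation_5}$_2$ with $\nabla^k v$ (your ingredient (a), their \eqref{est_L2_2}--\eqref{est_L2_4}), then with $\tfrac{1}{1+t}\nabla^k\widetilde\eta$ (your (b), their \eqref{est_L2_5}--\eqref{est_L2_8}), add twice the identity coming from $\partial_t\widetilde\eta=v$ paired with $\widetilde\eta$ (your (c), their \eqref{est_L2_11}), and sum over $k=0,1,2$. The only cosmetic differences are that you package the three ingredients into a single level-$i$ energy $E_i$, and for the top-order pressure term $\langle a_{il}\nabla^2\partial_l q,\nabla^2 v_i\rangle$ you integrate by parts in $l$ via the Piola identity, whereas the paper integrates the entire $\nabla^2(a_{il}\partial_l q)$ once by parts to obtain $\langle\nabla(a_{il}\partial_l q),\nabla\Delta v_i\rangle$; both routes yield the same $\|\nabla^2\widetilde\eta\|_{H^2}\|\nabla^3 v\|_{L^2}$-type control. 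One small slip: in your description of ingredient (b) the term coming from $\tfrac{1}{1+t}\langle\nabla^i\Delta\widetilde\eta,\nabla^i\widetilde\eta\rangle$ carries a minus sign on the right-hand side (it is $-\tfrac{1}{1+t}\|\nabla^{i+1}\widetilde\eta\|_{L^2}^2$), which becomes the correct $+\tfrac{1}{1+t}\|\nabla^{i+1}\widetilde\eta\|_{L^2}^2$ only after moving it to the dissipation side; your final accounting is nonetheless correct.
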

\begin{proof}
  Combining \eqref{sobolev} with \eqref{a_priori_hypothesis} and using \eqref{perturbation_1_1}, we deduce
  \begin{eqnarray}\label{a_priori_hypothesis_2}
    \begin{cases}
      \|\nabla \eta(t) -I\|_{L^\infty}\le C\sqrt{\delta},\\[1mm]
      \|J -{\rm det}\nabla y\|_{L^\infty} = \|J -1\|_{L^\infty}\le C\sqrt{\delta},\\[1mm]
      \|A -[(\nabla y)^{-1}]^{\top}\|_{L^\infty} = \|A - I\|_{L^\infty}\le C\sqrt{\delta}
    \end{cases}
  \end{eqnarray}
  for all $0\le t\le T$.
The bounds \eqref{coefficients_conditions} and \eqref{a_priori_hypothesis_2} imply  that
\begin{equation}\label{a_priori_hypothesis_3}
  \frac{1}{2}\le J \le 2,\,\,\,\,\,\,\,\,\,\,\,\,\,\,\,\|A,a\|_{L^\infty}\le 4,\,\,\,\,\,\,\,\,\,\,\,\,\,\,\,\|\nabla \eta\|_{L^\infty}\le 2.
 \end{equation}
 Similar to \eqref{local_existence_7_1} and \eqref{local_existence_7}, we have the following geometric identities:
 \begin{equation}\label{geometric_identities}
  \begin{split}
 \nabla J = a_{ij}\partial_j\nabla \widetilde{\eta}_{i},\,\,\,\,\,\,\,\,\,\,\,\,\,\,\,\,\,\,\,\,\,\,\,\,\,\,\,\,\,\,\,\,\,\,\,\,\,\,\,\,\,\,\,\,\,\,\,\,\,\,\,\,\,\,\,\,\,\,\,\,\nabla A_{ij}= -A_{il}\partial_l\nabla\widetilde{\eta}_{k}A_{kj},\\\nabla a_{ij}= a_{lk}\partial_k\nabla\widetilde{\eta}_{l}A_{ij}- a_{il}\partial_l\nabla\widetilde{\eta}_{k}A_{kj},\,\,\,\,\,\,\,\,\,\,\,\,\,\,\,\,\,\,\,\,\,\,\,\,\,\,\,\,\,\,\,\,\,\,\,\,\,\,\,\,\,\,\,\,\,\,\partial_l a_{il} =0.
\end{split}
 \end{equation}
 Combining \eqref{perturbation_1_1} with \eqref{a_priori_hypothesis_3} and \eqref{geometric_identities}, we obtain the following relations for the gradients of $(a,J,q)$ in terms of the gradients of $\widetilde{\eta}$:
\begin{eqnarray}\label{geometric_identities_1}
  \begin{cases}
|\nabla (a,J,q)|\le C|\nabla^2 \widetilde{\eta}|,\\
|\nabla^2 (a,J,q)|\le C|\nabla^3 \widetilde{\eta}| +C| \nabla^2 \widetilde{\eta}|^2,\\
|\nabla^3 (a,J,q)|\le C|\nabla^4 \widetilde{\eta}| + C|\nabla^2 \widetilde{\eta}||\nabla^3 \widetilde{\eta}|+C|\nabla^2 \widetilde{\eta}|^3,\\
|\partial_t q|\le C|\nabla v|,\\
    |\nabla \partial_t q|\le C|\nabla^2 v| + C|\nabla v||\nabla^2 \widetilde{\eta}|,\\
    |\nabla^2 \partial_t q|\le C|\nabla^3 v|+C|\nabla^2 v||\nabla^2 \widetilde{\eta}| +C|\nabla v||\nabla^3 \widetilde{\eta}| +C|\nabla v||\nabla^2 \widetilde{\eta}|^2,
\end{cases}
 \end{eqnarray}
 where $q=J^{-\gamma}$ is defined as in \eqref{perturbation_1_1}.

Applying $\nabla^k(k=0,1,2)$ to (\ref{compressible_elasticity_equation_5})$_2$, multiplying the results by $\nabla^k v_i$, respectively, summing 
over $k=0,1,2$ and integrating the resulting equations over $\mathbb{R}^d$, and applying integration by parts, we obtain
  \begin{equation}\label{est_L2_2}
     \begin{split}
      &\frac{1}{2}\udt (\|v\|_{H^2}^2 + \|\nabla \widetilde{\eta}\|_{H^2}^2) + \frac{2}{1+t}\|v\|_{H^2}^2 \\ &+ \frac{1}{(1+t)^{d(\gamma -1)+2}}\sum_{k = 0}^{2}\int_{\mathbb{R}^d} \nabla^k(a_{il}\partial_l q) \nabla^k v_i \,dy =0.
     \end{split}
  \end{equation}
In \eqref{est_L2_2}, using integration by parts and \eqref{a_priori_hypothesis_3}-\eqref{geometric_identities_1}, we derive
\begin{equation}\label{est_L2_3}
  \begin{split}
    \frac{-1}{(1+t)^{d(\gamma -1)+2}}\int_{\mathbb{R}^d} a_{il}\partial_l q v_i \,dy\le \frac{C}{(1+t)^{d(\gamma -1)+2}}\|\nabla^2 \widetilde{\eta}\|_{L^2}\|v\|_{L^2},\,\,\,\,\,\,\,\,\,\,\,\,\,\,\,\,\,\,\,\,\,\,\,\,\,\,
  \end{split}
\end{equation}
\begin{equation}\label{est_H1_3}
      \begin{split}
        \frac{-1}{(1+t)^{d(\gamma -1)+2}}\int_{\mathbb{R}^d} \nabla(a_{il}\partial_l q)\cdot\nabla v_i \,dy = &\, \frac{1}{(1+t)^{d(\gamma -1)+2}}\int_{\mathbb{R}^d} a_{il}\partial_l q  \Delta v_i \,dy \\ \le \,& \frac{C}{(1+t)^{d(\gamma -1)+2}}\|\nabla^2 \widetilde{\eta}\|_{L^2} \|\nabla^2 v\|_{L^2},
      \end{split}
    \end{equation}
and
\begin{equation}\label{est_H2_3}
    \begin{split}
      &\frac{-1}{(1+t)^{d(\gamma -1)+2}}\int_{\mathbb{R}^d} \nabla^2(a_{il}\partial_l q) \cdot\nabla^2 v_i \,dy \\  &= \frac{1}{(1+t)^{d(\gamma -1)+2}}\int_{\mathbb{R}^d} \nabla(a_{il}\partial_l q) \cdot\nabla\Delta v_i \,dy\\ & = \frac{1}{(1+t)^{d(\gamma -1)+2}}
      \int_{\mathbb{R}^d} (\partial_l q \nabla a_{il}\cdot \nabla\Delta v_i + a_{il}\nabla\partial_l q \cdot\nabla\Delta v_i) \,dy\\  
      &\le  \frac{C}{(1+t)^{d(\gamma -1)+2}}\big(\|\nabla^2 \widetilde{\eta}\|_{L^\infty}\|\nabla^2 \widetilde{\eta}\|_{L^2}\|\nabla^3 v\|_{L^2} + \|\nabla^3 \widetilde{\eta}\|_{L^2}\|\nabla^3 v\|_{L^2}\big).
    \end{split}
  \end{equation}
Substituting \eqref{est_L2_3}- \eqref{est_H2_3} into \eqref{est_L2_2}, we obtain
\begin{equation}\label{est_L2_4}
  \begin{split}
    \frac{1}{2}\udt (\|v\|_{H^2}^2 + \|\nabla \widetilde{\eta}\|_{H^2}^2) + \frac{2}{1+t}\|v\|_{H^2}^2  \le &\,\frac{C}{(1+t)^{2}}\|\nabla \widetilde{\eta}\|_{H^2}^2 +\frac{C}{(1+t)^{2}}\|v\|_{H^3}^2\\ & +\frac{C}{(1+t)^{2}}\|\nabla^2 \widetilde{\eta}\|_{L^\infty}\|\nabla^2 \widetilde{\eta}\|_{L^2}\|\nabla^3 v\|_{L^2}.
  \end{split}
\end{equation}
Similarly, applying $\nabla^k(k=0,1,2)$ to (\ref{compressible_elasticity_equation_5})$_2$, multiplying the results by $ \frac{1}{1+t}\nabla^k\widetilde{\eta}_i$, respectively, summing over $k=0,1,2$ and integrating the resulting equations
over $\mathbb{R}^d$, and applying integration by parts, we get
  \begin{equation}\label{est_L2_5}
     \begin{split}
        &\frac{1}{1+t}\sum_{k = 0}^{2}\langle \nabla^k \partial_t v, \nabla^k\widetilde{\eta}\rangle + \frac{2}{(1+t)^2}\sum_{k = 0}^{2}\langle \nabla^k v, \nabla^k\widetilde{\eta}\rangle \\ &+ \frac{1}{(1+t)^{d(\gamma -1)+3}}\sum_{k = 0}^{2} \int_{\mathbb{R}^d}\nabla^k( a_{il}\partial_l q) \nabla^k\widetilde{\eta}_i \,dy + \frac{1}{1+t}\|\nabla \widetilde{\eta}\|_{H^2}^2=0.
     \end{split}
  \end{equation}
  In \eqref{est_L2_5}, using \eqref{a_priori_hypothesis_3}-\eqref{geometric_identities_1} and integration by parts, we deduce
  \begin{equation}\label{est_L2_6}
    \begin{split}
     &\frac{1}{1+t}\sum_{k = 0}^{2}\langle \nabla^k \partial_t v, \nabla^k\widetilde{\eta}\rangle\\ &= \frac{1}{1+t} \udt \sum_{k = 0}^{2}\langle\nabla^k v, \nabla^k\widetilde{\eta}\rangle - \frac{1}{1+t}\|v\|_{H^2}^2\\ &= \udt\big[\frac{1}{1+t}\sum_{k = 0}^{2}\langle \nabla^k v, \nabla^k\widetilde{\eta}\rangle
     \big] + \frac{1}{(1+t)^{2}}\sum_{k = 0}^{2}\langle \nabla^k v, \nabla^k \widetilde{\eta}\rangle - \frac{1}{1+t} \|v\|_{H^2}^2,\,\,\,\,\,\,\,\,\,\,\,\,\,\,\,\,\,\,\,\,\,\,
    \end{split}
 \end{equation}
 \begin{equation}\label{est_L2_7}
  \begin{split}
    \frac{-1}{(1+t)^{d(\gamma -1)+3}}\int_{\mathbb{R}^d} a_{il}\partial_l q \widetilde{\eta}_i \,dy\le \frac{C}{(1+t)^{d(\gamma -1)+3}}\|\nabla^2 \widetilde{\eta}\|_{L^2}\|\widetilde{\eta}\|_{L^2},\,\,\,\,\,\,\,\,\,\,\,\,\,\,\,\,\,\,\,\,\,\,\,\,\,\,\,\,\,\,\,\,\,
  \end{split}
\end{equation}
\begin{equation}\label{est_H1_7}
    \begin{split}
      \frac{-1}{(1+t)^{d(\gamma -1)+3}}\int_{\mathbb{R}^d} \nabla(a_{il}\partial_l q) \cdot\nabla \widetilde{\eta}_i \,dy  &= \frac{1}{(1+t)^{d(\gamma -1)+3}}\int_{\mathbb{R}^d} a_{il}\partial_l q \Delta \widetilde{\eta}_i \,dy\\ &\le \frac{C}{(1+t)^{d(\gamma -1)+3}}\|\nabla^2 \widetilde{\eta}\|_{L^2}^2\,\,\,\,\,\,\,\,\,\,\,\,\,\,\,\,\,\,\,\,\,\,\,\,\,\,\,\,\,\,\,\,\,
\end{split}
  \end{equation}
and
\begin{equation}\label{est_H2_7}
  \begin{split}
    \frac{-1}{(1+t)^{d(\gamma -1)+3}}\int_{\mathbb{R}^d} \nabla^2(a_{il}\partial_l q) \cdot\nabla^2 \widetilde{\eta}_i \,dy  &= \frac{-1}{(1+t)^{d(\gamma -1)+3}}\int_{\mathbb{R}^d} a_{il}\partial_l q \Delta\Delta \widetilde{\eta}_i \,dy\\ &\le \frac{C}{(1+t)^{d(\gamma -1)+3}}\|\nabla^2 \widetilde{\eta}\|_{L^2}\|\nabla^4 \widetilde{\eta}\|_{L^2}.
  \end{split}
\end{equation}
Substituting \eqref{est_L2_6}-\eqref{est_H2_7} into \eqref{est_L2_5}, we obtain
\begin{equation}\label{est_L2_8}
  \begin{split}
    &\udt\big[\frac{1}{1+t}\sum_{k = 0}^{2}\langle \nabla^k v, \nabla^k\widetilde{\eta}\rangle\big] + \frac{3}{(1+t)^{2}}\sum_{k = 0}^{2}\langle \nabla^k v, \nabla^k\widetilde{\eta}\rangle - \frac{1}{1+t} \|v\|_{H^2}^2 + \frac{1}{1+t}\|\nabla \widetilde{\eta}\|_{H^2}^2\\ &\le \frac{C}{(1+t)^{2}}\|\nabla^2 \widetilde{\eta}\|_{H^2}^2 + \frac{C}{(1+t)^{4}}\|\widetilde{\eta}\|_{L^2}^2.
  \end{split}
\end{equation}
Applying $\nabla^k(k=0,1,2)$ to (\ref{compressible_elasticity_equation_5})$_1$ , multiplying the results by $\widetilde{\eta}$, respectively, 
summing over $k=0,1,2$ and integrating the resulting equations over $\mathbb{R}^d$, we get
\begin{equation*}\label{est_L2_9}
  \begin{split}
    \frac{1}{2}\udt \|\widetilde{\eta}\|_{H^2}^2 = \sum_{k = 0}^{2}\langle \nabla^k v, \nabla^k\widetilde{\eta}\rangle,
  \end{split}
\end{equation*}
which  yields
\begin{equation}\label{est_L2_11}
  \begin{split}
    \frac{1}{2} \udt\big[\frac{1}{(1+t)^2} \|\widetilde{\eta}\|_{H^2}^2\big] + \frac{1}{(1+t)^3} \|\widetilde{\eta}\|_{H^2}^2 = \frac{1}{(1+t)^2} \sum_{k = 0}^{2}\langle \nabla^k v, \nabla^k\widetilde{\eta}\rangle.
  \end{split}
\end{equation}
Summing \eqref{est_L2_4}, \eqref{est_L2_8} and $2\times$\eqref{est_L2_11}, we complete the proof of Lemma \ref{lemma_regularity_0}.
\end{proof}
To establish  \eqref{a_priori_hypothesis_1}, we proceed with the following highest-order derivative estimates.
\begin{lemma}\label{lemma_regularity_3}
  Under the assumptions of Proposition \ref{uniform_estimates}, there exists a positive constant $C$,
  that is independent of $T$, such that
  \begin{equation*}\label{est_H3_1}
     \begin{split}
        &\frac{1}{2}\udt \Big[\|\nabla^3 v(t)\|_{L^2}^2 + \frac{2}{(1+t)^2}\|\nabla^3\widetilde{\eta}(t)\|_{L^2}^2 + \|\nabla^4 \widetilde{\eta}(t)\|_{L^2}^2 + \frac{1}{\gamma(1+t)^{d(\gamma -1)+2}} \int_{\mathbb{R}^d}J^{\gamma+1}(t) \\  &|\nabla^3 q(t)|^2 \,dy + \frac{2}{1+t}\langle \nabla^3 v(t), \nabla^3\widetilde{\eta}(t)\rangle \Big] + \frac{1}{1+t}\|\nabla^3 v(t)\|_{L^2}^2 + \frac{2}{(1+t)^3}\|\nabla^3 \widetilde{\eta}(t)\|_{L^2}^2\\  &+ \frac{1}{1+t}\|\nabla^4 \widetilde{\eta}(t)\|_{L^2}^2  + \frac{d(\gamma -1)+2}{2\gamma(1+t)^{d(\gamma -1)+3}}\int_{\mathbb{R}^d} J^{\gamma +1}|\nabla^3 q|^2 \,dy + \frac{1}{(1+t)^2}\langle \nabla^3 v(t), \nabla^3\widetilde{\eta}(t)\rangle\\ 
        &\le\frac{C}{(1+t)^{2}}\big(\mathcal{L}(t) + \mathcal{L}^{\frac{7}{2}}(t)\big)
     \end{split}
   \end{equation*}
   for all $0\leq t \leq T$.
\end{lemma}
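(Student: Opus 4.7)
The proof parallels the structure of Lemma \ref{lemma_regularity_0}, but at this top level of regularity the direct integration by parts used for the pressure term would require control of $\nabla^4 v$, which is not available. The key device, already exploited in the proof of Proposition \ref{local_existence_1}, is the identity $a_{il}\partial_l v_i=\partial_t J=-\frac{J^{\gamma+1}}{\gamma}\partial_t q$ from \eqref{local_existence_10}, which trades a spatial derivative of $v$ that we do not control for a time derivative of the pressure that we do. Combined with the Piola identity $\partial_l a_{il}=0$, this produces the pressure energy $\frac{1}{2\gamma(1+t)^{d(\gamma-1)+2}}\int_{\mathbb{R}^d}J^{\gamma+1}|\nabla^3 q|^2\,dy$ on the left-hand side, whose time derivative, when it hits the temporal weight $(1+t)^{-d(\gamma-1)-2}$, yields the damping term $\frac{d(\gamma-1)+2}{2\gamma(1+t)^{d(\gamma-1)+3}}\int_{\mathbb{R}^d}J^{\gamma+1}|\nabla^3 q|^2\,dy$ stated in the conclusion.

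The plan is to sum three identities. First, apply $\nabla^3$ to \eqref{compressible_elasticity_equation_5}$_2$ and pair with $\nabla^3 v_i$: the $-\Delta\widetilde{\eta}_i$ term, combined with $\partial_t\widetilde{\eta}=v$, contributes $\frac{1}{2}\udt\|\nabla^4\widetilde{\eta}\|_{L^2}^2$; the pressure term is handled exactly as in \eqref{local_existence_8}--\eqref{local_existence_13} by splitting off the leading piece $\frac{1}{(1+t)^{d(\gamma-1)+2}}\int a_{il}\nabla^3\partial_l q\cdot\nabla^3 v_i\,dy$, which after integration by parts using Piola and substitution of \eqref{local_existence_10} generates both the pressure-energy time derivative and its damping contribution, while the remaining commutators (analogues of $R_3,R_4,R_5,R_7,R_8,R_9,R_{12}$) go into the error. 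Second, pair $\nabla^3$ of \eqref{compressible_elasticity_equation_5}$_2$ with $\frac{1}{1+t}\nabla^3\widetilde{\eta}_i$ and use the splitting
\begin{equation*}
\frac{1}{1+t}\langle\nabla^3\partial_t v,\nabla^3\widetilde{\eta}\rangle=\udt\Bigl[\frac{1}{1+t}\langle\nabla^3 v,\nabla^3\widetilde{\eta}\rangle\Bigr]+\frac{1}{(1+t)^2}\langle\nabla^3 v,\nabla^3\widetilde{\eta}\rangle-\frac{1}{1+t}\|\nabla^3 v\|_{L^2}^2,
\end{equation*}
so that the Laplacian produces, via integration by parts, the desired $\frac{1}{1+t}\|\nabla^4\widetilde{\eta}\|_{L^2}^2$ damping, and the pressure term is bounded, after transferring derivatives onto $\widetilde{\eta}$ via Piola, by $\frac{C}{(1+t)^{d(\gamma-1)+3}}\|\nabla q\|_{H^2}\|\nabla^4\widetilde{\eta}\|_{L^2}$. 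Third, pair \eqref{compressible_elasticity_equation_5}$_1$ differentiated three times with $\frac{1}{(1+t)^2}\nabla^3\widetilde{\eta}$ to produce the $\frac{1}{(1+t)^3}\|\nabla^3\widetilde{\eta}\|_{L^2}^2$ damping, together with the last cross term on the left-hand side.

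The principal obstacle is the quantitative bookkeeping needed to pack every commutator into the form $\frac{C}{(1+t)^{2}}\bigl(\mathcal{L}(t)+\mathcal{L}^{7/2}(t)\bigr)$. Using the product identities \eqref{geometric_identities_1} together with the Sobolev embeddings \eqref{sobolev} and the uniform bounds \eqref{a_priori_hypothesis_3}, every commutator arising from $\nabla^3(a_{il}\partial_l q)-a_{il}\nabla^3\partial_l q$ or from $\nabla^3\bigl(J^{\gamma+1}\partial_t q\bigr)$ can be written as a finite sum of products of norms of $\nabla^j\widetilde{\eta}$, $\nabla^j q$, and $\nabla^j v$ with $j\le 3$. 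The worst contribution comes from the cubic piece $|\nabla^2\widetilde{\eta}|^3$ appearing in the bound for $|\nabla^3 a|$: combined with $|\nabla q|\lesssim |\nabla^2\widetilde{\eta}|$ and $|\nabla^3 v|$, and with each derivative of order $\le 2$ placed in $L^\infty$ via \eqref{sobolev}, this term accumulates seven half-powers of $\mathcal{L}$, accounting precisely for the $\mathcal{L}^{7/2}$ ceiling; all other commutators are of lower order in $\mathcal{L}$. The temporal weight $\frac{1}{(1+t)^2}$ emerges naturally since every commutator carries at least a $\frac{1}{(1+t)^{d(\gamma-1)+2}}$ or $\frac{1}{(1+t)^{d(\gamma-1)+3}}$ prefactor, and $d(\gamma-1)+2\ge 2$ for $\gamma>1$. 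Assembling the three identities and applying Cauchy's inequality to absorb fractional powers yields the claimed differential inequality.
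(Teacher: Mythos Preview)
Your proposal follows the paper's proof essentially step for step: the three pairings (with $\nabla^3 v_i$, with $\frac{1}{1+t}\nabla^3\widetilde{\eta}_i$, and the weighted pairing of \eqref{compressible_elasticity_equation_5}$_1$), the use of the Piola identity together with \eqref{local_existence_10} to extract the pressure energy at top order, and the commutator bookkeeping all coincide with the paper's \eqref{est_H3_2}--\eqref{est_H3_16}.

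One minor correction to your accounting: the commutator you single out as producing $\mathcal{L}^{7/2}$---namely the cubic piece $|\nabla^2\widetilde{\eta}|^3$ in $|\nabla^3 a|$ paired with $|\nabla q|\lesssim|\nabla^2\widetilde{\eta}|$ and $|\nabla^3 v|$---is a five-factor product and, after placing two factors in $L^2$ and three in $L^\infty$, gives only $\mathcal{L}^{5/2}$ (this is exactly the bound the paper records in \eqref{est_H3_4}). The genuine $\mathcal{L}^{7/2}$ contribution arises instead from the analogue of $R_{12}$, i.e., the term $\int J^{\gamma}\partial_t J\,|\nabla^3 q|^2\,dy$: here the cubic piece $|\nabla^2\widetilde{\eta}|^3$ in $|\nabla^3 q|$ appears \emph{squared} and is then multiplied by $|\partial_t J|\lesssim|\nabla v|$, yielding $\|\nabla v\|_{L^\infty}\|\nabla^2\widetilde{\eta}\|_{L^\infty}^4\|\nabla^2\widetilde{\eta}\|_{L^2}^2\lesssim\mathcal{L}^{7/2}$ (cf.\ \eqref{est_H3_6}--\eqref{est_H3_8_1}). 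Since you already list this term among the commutators to be controlled, the slip does not affect the argument.
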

\begin{proof}
  Multiplying $\nabla^3$(\ref{compressible_elasticity_equation_5})$_2$ by $\nabla^3 v_i$, respectively, summing the resuls
  and integrating over $\mathbb{R}^d$, and applying integration by parts, we get
  \begin{equation}\label{est_H3_2}
     \begin{split}
      &\frac{1}{2}\udt (\|\nabla^3 v\|_{L^2}^2 + \|\nabla^4 \widetilde{\eta}\|_{L^2}^2) + \frac{2}{1+t}\|\nabla^3 v\|_{L^2}^2 \\ &+ \frac{1}{(1+t)^{d(\gamma -1)+2}}\int_{\mathbb{R}^d} \nabla^3(a_{il}\partial_l q) \cdot\nabla^3 v_i \,dy =0.
     \end{split}
  \end{equation}
In \eqref{est_H3_2}, we deduce
  \begin{equation}\label{est_H3_3}
    \begin{split}
      &\frac{1}{(1+t)^{d(\gamma -1)+2}}\int_{\mathbb{R}^d} \nabla^3(a_{il}\partial_l q) \cdot\nabla^3 v_i \,dy\\ & = \frac{1}{(1+t)^{d(\gamma -1)+2}}\Big(\int_{\mathbb{R}^d} \partial_l q\nabla^3 a_{il} \cdot\nabla^3 v_i \,dy + 3\int_{\mathbb{R}^d} (\nabla^2 a_{il}\nabla\partial_l q)\cdot \nabla^3 v_i \,dy \\ &+ 3\int_{\mathbb{R}^d} (\nabla a_{il}\nabla^2\partial_l q)\cdot \nabla^3 v_i \,dy + \int_{\mathbb{R}^d} a_{il}\nabla^3\partial_l q \cdot\nabla^3 v_i \,dy\Big)=: I_1 + I_2 + I_3 + I_4.
    \end{split}
  \end{equation}
Using \eqref{a_priori_hypothesis_3}-\eqref{geometric_identities_1}, the terms  $I_1, I_2,  I_3$ can be estimated by
\begin{equation}\label{est_H3_4}
  \begin{split}
    |I_1| + |I_2| + |I_3|\le&\, \frac{C}{(1+t)^{2}}\|\nabla^2 \widetilde{\eta}\|_{L^\infty}\|\nabla^3 v\|_{L^2}(\|\nabla^4 \widetilde{\eta}\|_{L^2} + \|\nabla^2 \widetilde{\eta}\|_{L^\infty}\|\nabla^3 \widetilde{\eta}\|_{L^2} \\ &+ \|\nabla^2 \widetilde{\eta}\|_{L^\infty}^2\|\nabla^2 \widetilde{\eta}\|_{L^2}) + \|\nabla^3 v\|_{L^2}(\|\nabla^3 \widetilde{\eta}\|_{L^4} + \|\nabla^2 \widetilde{\eta}\|_{L^\infty}\|\nabla^2 \widetilde{\eta}\|_{L^4})^2\\ \le&\, \frac{C}{(1+t)^{2}}\big(\mathcal{L}^{\frac{3}{2}}(t) + \mathcal{L}^2(t) + \mathcal{L}^{\frac{5}{2}}(t)\big).
  \end{split}
\end{equation}
As for $I_4$ in \eqref{est_H3_3}, using \eqref{local_existence_11}-\eqref{local_existence_14},
we obtain
\begin{equation}\label{est_H3_5}
  \begin{split}
    I_4 = &\frac{1}{2}\udt \Big[\frac{1}{\gamma(1+t)^{d(\gamma -1)+2}}\int_{\mathbb{R}^d} J^{\gamma +1}|\nabla^3 q|^2 dy\Big] + \frac{d(\gamma -1)+2}{2\gamma(1+t)^{d(\gamma -1)+3}}\int_{\mathbb{R}^d} J^{\gamma +1}|\nabla^3 q|^2 dy\\ 
    &- \frac{(\gamma +1)}{2\gamma(1+t)^{d(\gamma -1)+2}}\int_{\mathbb{R}^d} J^{\gamma}\partial_t J|\nabla^3 q|^2 \,dy \\
    &+ \frac{1}{\gamma(1+t)^{d(\gamma -1)+2}}\Big[\int_{\mathbb{R}^d}\nabla^3 q \cdot\nabla^3(J^{\gamma +1})\partial_t q \,dy + 3\int_{\mathbb{R}^d} \nabla^3 q\cdot\Big(\nabla^2(J^{\gamma +1})\nabla\partial_t q \Big) \,dy\\
    &\qquad\qquad\qquad\qquad\qquad+ 3\int_{\mathbb{R}^d} \nabla^3 q\cdot\Big(\nabla (J^{\gamma +1})\nabla^2\partial_tq\Big)dy\Big]\\
    &+\frac{1}{(1+t)^{d(\gamma -1)+2}}\Big[\int_{\mathbb{R}^d} \nabla^3 q\cdot\nabla^3 a_{il}\partial_l v_i \,dy
    + 3\int_{\mathbb{R}^d} \nabla^3 q\cdot\big(\nabla^2 a_{il} \nabla\partial_l v_i\big)\,dy\\
    &\qquad\qquad\qquad\qquad + 3\int_{\mathbb{R}^d} \nabla^3 q\cdot\big(\nabla a_{il}\nabla^2\partial_l v_i\big) \,dy\Big]\\
    &=:I_5 + I_6 + I_7 + I_8 + I_9.
  \end{split}
\end{equation}
Using \eqref{a_priori_hypothesis_3}-\eqref{geometric_identities_1}, the terms  $I_7, I_8,  I_9$ can be estimated by
\begin{equation}\label{est_H3_6}
  \begin{split}
    |I_7|\le \frac{C}{(1+t)^{2}}\|\nabla v\|_{L^\infty}(\|\nabla^4 \widetilde{\eta}\|_{L^2} + \|\nabla^2 \widetilde{\eta}\|_{L^\infty}\|\nabla^3 \widetilde{\eta}\|_{L^2} + \|\nabla^2 \widetilde{\eta}\|_{L^\infty}^2\|\nabla^2 \widetilde{\eta}\|_{L^2})^2,
  \end{split}
\end{equation}
\begin{equation}\label{est_H3_7}
  \begin{split}
    |I_8|\le &\frac{C}{(1+t)^{2}}\|\nabla v\|_{L^\infty}(\|\nabla^4 \widetilde{\eta}\|_{L^2} + \|\nabla^2 \widetilde{\eta}\|_{L^\infty}\|\nabla^3 \widetilde{\eta}\|_{L^2} + \|\nabla^2 \widetilde{\eta}\|_{L^\infty}^2\|\nabla^2 \widetilde{\eta}\|_{L^2})^2 \\ & + \frac{C}{(1+t)^{2}}(\|\nabla^4 \widetilde{\eta}\|_{L^2} + \|\nabla^2 \widetilde{\eta}\|_{L^\infty}\|\nabla^3 \widetilde{\eta}\|_{L^2} + \|\nabla^2 \widetilde{\eta}\|_{L^\infty}^2\|\nabla^2 \widetilde{\eta}\|_{L^2})(\|\nabla^3 \widetilde{\eta}\|_{L^4} \\ &+ \|\nabla^2 \widetilde{\eta}\|_{L^\infty}\|\nabla^2 \widetilde{\eta}\|_{L^4})(\|\nabla^2 v\|_{L^4} + \|\nabla v\|_{L^\infty}\|\nabla^2 \widetilde{\eta}\|_{L^4})+ \frac{C}{(1+t)^{2}}\|\nabla^2 \widetilde{\eta}\|_{L^\infty}\\ &(\|\nabla^4 \widetilde{\eta}\|_{L^2} + \|\nabla^2 \widetilde{\eta}\|_{L^\infty}\|\nabla^3 \widetilde{\eta}\|_{L^2} + \|\nabla^2 \widetilde{\eta}\|_{L^\infty}^2\|\nabla^2 \widetilde{\eta}\|_{L^2})(\|\nabla^3 v\|_{L^2} + \|\nabla^2 \widetilde{\eta}\|_{L^\infty}\\ &\|\nabla^2 v\|_{L^2} + \|\nabla v\|_{L^\infty}\|\nabla^3 \widetilde{\eta}\|_{L^2} + \|\nabla v\|_{L^\infty}\|\nabla^2 \widetilde{\eta}\|_{L^4}^2),
  \end{split}
\end{equation}
and
\begin{equation}\label{est_H3_8}
  \begin{split}
    |I_9|\le &\frac{C}{(1+t)^{2}}\|\nabla v\|_{L^\infty}(\|\nabla^4 \widetilde{\eta}\|_{L^2} + \|\nabla^2 \widetilde{\eta}\|_{L^\infty}\|\nabla^3 \widetilde{\eta}\|_{L^2} + \|\nabla^2 \widetilde{\eta}\|_{L^\infty}^2\|\nabla^2 \widetilde{\eta}\|_{L^2})^2 \\ & + \frac{C}{(1+t)^{2}}\|\nabla^2 v\|_{L^4}(\|\nabla^4 \widetilde{\eta}\|_{L^2} + \|\nabla^2 \widetilde{\eta}\|_{L^\infty}\|\nabla^3 \widetilde{\eta}\|_{L^2} + \|\nabla^2 \widetilde{\eta}\|_{L^\infty}^2\|\nabla^2 \widetilde{\eta}\|_{L^2})\\ &(\|\nabla^3 \widetilde{\eta}\|_{L^4} + \|\nabla^2 \widetilde{\eta}\|_{L^\infty}\|\nabla^2 \widetilde{\eta}\|_{L^4})+ \frac{C}{(1+t)^{2}}\|\nabla^2 \widetilde{\eta}\|_{L^\infty}\|\nabla^3 v\|_{L^2}(\|\nabla^4 \widetilde{\eta}\|_{L^2} \\ &+ \|\nabla^2 \widetilde{\eta}\|_{L^\infty}\|\nabla^3 \widetilde{\eta}\|_{L^2} + \|\nabla^2 \widetilde{\eta}\|_{L^\infty}^2\|\nabla^2 \widetilde{\eta}\|_{L^2}).
  \end{split}
\end{equation}
Combining the estimates from \eqref{est_H3_6} to \eqref{est_H3_8} and using the Sobolev inequality \eqref{sobolev}, we derive
\begin{equation}\label{est_H3_8_1}
  \begin{split}
    |I_7| + |I_8| + |I_9|\le &\frac{C}{(1+t)^{2}}\big(\mathcal{L}^{\frac{3}{2}}(t) + \mathcal{L}^2(t)+ \mathcal{L}^{\frac{5}{2}}(t)+ \mathcal{L}^3(t)+ \mathcal{L}^{\frac{7}{2}}(t)\big).
  \end{split}
\end{equation}
Substituting \eqref{est_H3_3}-\eqref{est_H3_5} and \eqref{est_H3_8_1} into \eqref{est_H3_2}, we obtain
\begin{equation}\label{est_H3_9}
  \begin{split}
    &\frac{1}{2}\udt \big[\|\nabla^3 v\|_{L^2}^2 + \|\nabla^4 \widetilde{\eta}\|_{L^2}^2 + \frac{1}{\gamma(1+t)^{d(\gamma -1)+2}}\int_{\mathbb{R}^d} J^{\gamma +1}|\nabla^3 q|^2 \,dy\big] + \frac{2}{1+t}\|\nabla^3 v\|_{L^2}^2 \\ &+ \frac{d(\gamma -1)+2}{2\gamma(1+t)^{d(\gamma -1)+3}}\int_{\mathbb{R}^d} J^{\gamma +1}|\nabla^3 q|^2 \,dy\\ &\le \frac{C}{(1+t)^{2}}\big(\mathcal{L}^{\frac{3}{2}}(t) + \mathcal{L}^2(t)+ \mathcal{L}^{\frac{5}{2}}(t)+ \mathcal{L}^3(t)+ \mathcal{L}^{\frac{7}{2}}(t)\big).
  \end{split}
\end{equation}
Similarly, multiplying $\nabla^3$(\ref{compressible_elasticity_equation_5})$_2$ by $\frac{1}{1+t}\nabla^3\widetilde{\eta}_i$, respectively, 
summing and integrating the resulting equations, and applying integration by parts, we deduce
  \begin{equation}\label{est_H3_10}
     \begin{split}
      \frac{1}{1+t}\|\nabla^4 \widetilde{\eta}\|_{L^2}^2 &+ \frac{1}{1+t}\langle \partial_t \nabla^3 v, \nabla^3\widetilde{\eta}\rangle + \frac{2}{(1+t)^2}\langle \nabla^3 v, \nabla^3\widetilde{\eta}\rangle \\ &+ \frac{1}{(1+t)^{d(\gamma -1)+3}}\int_{\mathbb{R}^d} \nabla^3(a_{il}\partial_l q) \cdot\nabla^3 \widetilde{\eta}_i \,dy =0.
     \end{split}
  \end{equation}
  In \eqref{est_H3_10}, using integration by parts, \eqref{a_priori_hypothesis_3}-\eqref{geometric_identities_1}, we obtain
  \begin{equation}\label{est_H3_11}
    \begin{split}
      \frac{1}{1+t}\langle \partial_t \nabla^3 v, \nabla^3\widetilde{\eta}\rangle &= \frac{1}{1+t} \udt \langle \nabla^3 v, \nabla^3\widetilde{\eta}\rangle - \frac{1}{1+t} \|\nabla^3 v\|_{L^2}^2\\ &=\udt\big[\frac{1}{1+t}\langle \nabla^3 v, \nabla^3\widetilde{\eta}\rangle\big] + \frac{1}{(1+t)^{2}}\langle \nabla^3 v, \nabla^3\widetilde{\eta}\rangle - \frac{1}{1+t} \|\nabla^3 v\|_{L^2}^2
    \end{split}
 \end{equation}
 and
 \begin{equation}\label{est_H3_12}
  \begin{split}
    &\frac{-1}{(1+t)^{d(\gamma -1)+3}}\int_{\mathbb{R}^d} \nabla^3(a_{il}\partial_l q) \cdot\nabla^3 \widetilde{\eta}_i \,dy\\
    & = \frac{1}{(1+t)^{d(\gamma -1)+3}}\int_{\mathbb{R}^d} \nabla^2(a_{il}\partial_l q) \cdot\nabla^2\Delta \widetilde{\eta}_i \,dy\\  &= \,\frac{1}{(1+t)^{d(\gamma -1)+3}}\Big(\int_{\mathbb{R}^d} (\nabla^2a_{il}\partial_l q) \cdot\nabla^2\Delta \widetilde{\eta}_i \,dy + \int_{\mathbb{R}^d} 2(\nabla a_{il}\nabla\partial_l q) \cdot\nabla^2\Delta \widetilde{\eta}_i \,dy \\ &+\int_{\mathbb{R}^d} (a_{il}\nabla^2\partial_l q) \cdot\nabla^2\Delta \widetilde{\eta}_i \,dy\Big)\\  &\le\frac{C}{(1+t)^{d(\gamma -1)+3}}\|\nabla^2 \widetilde{\eta}\|_{L^\infty}\|\nabla^4 \widetilde{\eta}\|_{L^2}(\|\nabla^3 \widetilde{\eta}\|_{L^2} + \|\nabla^2 \widetilde{\eta}\|_{L^\infty}\|\nabla^2 \widetilde{\eta}\|_{L^2}) \\ & + \frac{C}{(1+t)^{d(\gamma -1)+3}}\|\nabla^4 \widetilde{\eta}\|_{L^2}(\|\nabla^4 \widetilde{\eta}\|_{L^2} + \|\nabla^2 \widetilde{\eta}\|_{L^\infty}\|\nabla^3 \widetilde{\eta}\|_{L^2} + \|\nabla^2 \widetilde{\eta}\|_{L^\infty}^2\|\nabla^2 \widetilde{\eta}\|_{L^2})\\  &\le \frac{C}{(1+t)^{2}}(\mathcal{L}(t) +\mathcal{L}^{\frac{3}{2}}(t) + \mathcal{L}^2(t)).
  \end{split}
\end{equation}
Substituting \eqref{est_H3_11} and \eqref{est_H3_12} into \eqref{est_H3_10} and using the Sobolev inequalities \eqref{sobolev}, we deduce
\begin{equation}\label{est_H3_13}
  \begin{split}
    &\udt\big[\frac{1}{1+t}\langle \nabla^3 v, \nabla^3\widetilde{\eta}\rangle\big] + \frac{3}{(1+t)^{2}}\langle \nabla^3 v, \nabla^3\widetilde{\eta}\rangle - \frac{1}{1+t} \|\nabla^3 v\|_{L^2}^2 + \frac{1}{1+t}\|\nabla^4 \widetilde{\eta}\|_{L^2}^2\\ 
    &\le\frac{C}{(1+t)^{2}}(\mathcal{L}(t) +\mathcal{L}^{\frac{3}{2}}(t) + \mathcal{L}^2(t)).
  \end{split}
\end{equation}
Multiplying $\nabla^3$(\ref{compressible_elasticity_equation_5})$_1$ by $\nabla^3\widetilde{\eta}$, we obtain
\begin{equation*}\label{est_H3_14}
  \begin{split}
    \frac{1}{2}\udt \|\nabla^3\widetilde{\eta}\|_{L^2}^2 = \langle \nabla^3 v, \nabla^3\widetilde{\eta}\rangle,
  \end{split}
\end{equation*}
which implies
\begin{equation}\label{est_H3_16}
  \begin{split}
    \frac{1}{2} \udt\big[\frac{1}{(1+t)^2} \|\nabla^3 \widetilde{\eta}\|_{L^2}^2\big] + \frac{1}{(1+t)^3} \|\nabla^3\widetilde{\eta}\|_{L^2}^2 = \frac{1}{(1+t)^2} \langle \nabla^3 v, \nabla^3\widetilde{\eta}\rangle.
  \end{split}
\end{equation}
Summing \eqref{est_H3_9}, \eqref{est_H3_13} and $2\times$\eqref{est_H3_16}, and applying Young's inequality, we complete the proof of Lemma \ref{lemma_regularity_3}.
\end{proof}
\subsection{Proof of Proposition \ref{uniform_estimates}}\label{subection_5}
Combining Lemma \ref{lemma_regularity_0} with Lemma \ref{lemma_regularity_3}, and using Young's inequality, we obtain
\begin{equation}\label{est_L_1}
  \begin{split}
  \udt \mathcal{L}(t) + \frac{1}{1+t} \mathcal{L}(t)\le \frac{C}{(1+t)^{2}}\big(\mathcal{L}(t) + \mathcal{L}^{\frac{7}{2}}(t)\big).
  \end{split}
\end{equation}
   Multiplying both sides of \eqref{est_L_1} by $(1+t)\, \mathrm {e}^{C(1+t)^{-1}}$, we derive
   \begin{equation}\label{est_L_3}
	   \begin{split}
			\udt [\mathcal{L}(t)(1+t)\,  \mathrm {e}^{C(1+t)^{-1}}]  \le \frac{C\mathrm {e}^{C}}{1+t}\mathcal{L}^{\frac{7}{2}}(t).
	   \end{split}
   \end{equation}
 Define
   \begin{equation*}\label{M_1}
	   \mathcal{M}(t): = \sup_{0\leq s \leq t}(1+s)\mathcal{L}(s).
   \end{equation*}
 Thus, for all $0\leq t \leq T$, we have
   $$\mathcal{L}(t) \le (1+t)^{-1}\mathcal{M}(t).$$	
Substituting this into \eqref{est_L_3}, we immediately obtain
   \begin{equation}\label{est_L_4}
	   \begin{split}
		\udt &[\mathcal{L}(t)(1+t)\, \mathrm {e}^{C(1+t)^{-1}}]  \le \,C\mathrm {e}^{C}(1+t)^{-\frac{9}{2}}\mathcal{M}^{\frac{7}{2}}(t).
	   \end{split}
   \end{equation}
  Integrating (\ref{est_L_4}) over $[0, t]$ for any $t\in[0,T]$, we get
   \begin{equation}\label{est_L_5}
	   \begin{split}
      (1+t)\mathcal{L}(t)  \le& \,\bar{C_1}\mathcal{L}(0) + \bar{C_1}\mathcal{M}^{\frac{7}{2}}(t),
	   \end{split}
   \end{equation}
   where $\bar{C_1}$ depends on $C$ only.

   Taking the supremum with respect to $t$ from $0$ to $T$ in \eqref{est_L_5} and using \eqref{a_priori_hypothesis} and $\delta \le 1$ in \eqref{coefficients_conditions}, we deduce
   \begin{equation*}\label{est_L_6}
	   \begin{split}
		   \mathcal{M}(T)  \le& \bar{C_1}\mathcal{L}(0) + \bar{C_1}\mathcal{M}^{\frac{7}{2}}(t)
\\ \le& \bar{C_1}\varepsilon_1^2 + \bar{C_1}\delta\mathcal{M}(T),
	   \end{split}
   \end{equation*}
which together  with $\delta = 4\bar{C_1} \varepsilon_1^2$ and $\bar{C_1}\delta\le\frac{1}{2}$  in \eqref{coefficients_conditions} implies
   \begin{equation*}\label{est_L_7}
	   \begin{split}
		   \mathcal{M}(T) \le  \frac{\delta}{2}.
	   \end{split}
   \end{equation*}
   This completes the proof of Proposition \ref{uniform_estimates}.  With the uniform regularity result established, we can now demonstrate that \eqref{a_priori_hypothesis} holds throughout the maximal lifespan of the solution to the Cauchy problem \eqref{compressible_elasticity_equation_5}.
\begin{corollary}\label{corollary_1}
  Under the assumptions of Theorem \ref{global_existence_Lagrangian}, the following inequality
  \begin{equation}\label{a priori-conclusion}
      (1+t)\mathcal{L}(t) \le \delta
    \end{equation}
    holds for all $t\in[0,T^*)$, where $T^*$ is the maximal lifespan of the solution to the Cauchy problem \eqref{compressible_elasticity_equation_5}.
  \end{corollary}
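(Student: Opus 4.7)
The plan is to prove Corollary \ref{corollary_1} by a standard continuity (bootstrap) argument, using Proposition \ref{uniform_estimates} as the bootstrap engine. The key observation is that Proposition \ref{uniform_estimates} states an implication: whenever $\sup_{0 \le t \le T}(1+t)\mathcal{L}(t) \le \delta$ on some interval $[0,T] \subset [0,T^*)$, then in fact the strict improvement $\sup_{0 \le t \le T}(1+t)\mathcal{L}(t) \le \delta/2$ holds. This gap between the assumed bound $\delta$ and the derived bound $\delta/2$ is precisely what allows one to propagate the estimate beyond any $T < T^*$ at which it holds.

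First, I would verify that the bound holds at $t = 0$. By the initial data assumption $\|v_0\|_{H^3} + \|\widetilde{\eta}_0\|_{H^4} \le \varepsilon_1$, and by inspection of the definition of $\mathcal{L}(t)$ in \eqref{L}, one has $\mathcal{L}(0) \le \bar{C}_1 \varepsilon_1^2$ for some constant (the cross term being controlled by Cauchy's inequality, and the highest-order pressure term by $\|\nabla^3 q_0\|_{L^2}^2 \le C\|\widetilde{\eta}_0\|_{H^4}^2(1 + \|\widetilde{\eta}_0\|_{H^4}^4)$ via the geometric identities \eqref{local_existence_9_1}). Hence at $t = 0$, $(1+t)\mathcal{L}(t)\big|_{t=0} = \mathcal{L}(0) \le \bar{C}_1 \varepsilon_1^2 = \delta/4 < \delta$ by the parameter relation \eqref{coefficients_conditions}.

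Next, I would define
\[
T_\star := \sup \Bigl\{ T \in [0, T^*) : \sup_{0 \le t \le T}(1+t)\mathcal{L}(t) \le \delta \Bigr\}.
\]
By the initial bound and the continuity of $t \mapsto \mathcal{L}(t)$ on $[0, T^*)$ (which follows from $(v,\widetilde{\eta}) \in C([0, T^*); H^3 \times H^4)$ as produced by Proposition \ref{local_existence_1} and extended as a strong solution), the set in question is nonempty, so $T_\star > 0$. I would argue by contradiction: suppose $T_\star < T^*$. By continuity, $\sup_{0 \le t \le T_\star}(1+t)\mathcal{L}(t) \le \delta$, so Proposition \ref{uniform_estimates} applies and yields $\sup_{0 \le t \le T_\star}(1+t)\mathcal{L}(t) \le \delta/2$. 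Continuity at $T_\star$ (using $T_\star < T^*$) then gives some $T' \in (T_\star, T^*)$ with $(1+t)\mathcal{L}(t) \le 3\delta/4 < \delta$ on $[0, T']$, contradicting the maximality of $T_\star$. Hence $T_\star = T^*$, which is exactly \eqref{a priori-conclusion}.

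The argument is largely mechanical; the only subtle point is ensuring that $\mathcal{L}(t)$ is genuinely continuous up to $T_\star$ and that applying Proposition \ref{uniform_estimates} on the closed interval $[0, T_\star]$ is legitimate. Both follow from the fact that the local solution from Proposition \ref{local_existence_1} lies in $C([0, T_\star]; H^3 \times H^4)$ (since $T_\star < T^*$), which makes $\mathcal{L}$ continuous and allows a standard closed/open subset decomposition. I do not anticipate any genuine obstacle here, as the quantitative heavy lifting has already been done in Lemmas \ref{lemma_regularity_0}--\ref{lemma_regularity_3} and Proposition \ref{uniform_estimates}.
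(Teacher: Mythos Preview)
Your proposal is correct and follows essentially the same continuity (bootstrap) argument as the paper: verify the bound at $t=0$ via \eqref{coefficients_conditions}, define the maximal time on which \eqref{a priori-conclusion} holds, and use Proposition \ref{uniform_estimates} together with continuity in time to derive a contradiction if that maximal time falls short of $T^*$. The only cosmetic difference is that the paper phrases the contradiction by passing to the limit $T\to (T^{**})^-$ to show the bound persists at $T^{**}$, whereas you extend slightly past $T_\star$; these are equivalent formulations of the same argument.
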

  \begin{proof}
  Since
  \begin{equation*}
    (1+0)\mathcal{L}(0) \le C\varepsilon_1^2\le\frac{\delta}{4},
  \end{equation*}
 we conclude from the continuity-in-time of the solution that there exists a $T_2>0$ such that
   (\ref{a priori-conclusion}) holds for all $t\in[0,T_2]$. Let $T^{\ast\ast}\in(T_2,T^\ast]$ be the maximal time 
   such that (\ref{a priori-conclusion}) holds for all $t\in[0,T^{\ast\ast})$   but fails at $t=T^{\ast\ast}$.

   We will show that if $T^{\ast\ast}<T^{\ast},$ it will lead to a contradiction. In fact, for any $T\in[0,T^{\ast\ast})$, the definition of $T^{\ast\ast}$ yields that (\ref{a priori-conclusion}) holds for all $t\in[0,T]$. By virtue of Proposition \ref{uniform_estimates}, it holds that
   \begin{equation*}
    (1+T)\mathcal{L}(T) \le \frac{\delta}{2}.
   \end{equation*} By virtue of the continuity-in-time of the solution in $[0,T^{\ast})$ and $T^{\ast\ast}<T^{\ast},$ we let $T\rightarrow (T^{\ast\ast})^-$ and get
   \begin{equation*}
    (1+T^{\ast\ast})\mathcal{L}(T^{\ast\ast})  \le \delta,
   \end{equation*}which is a contradiction with the definition of $T^{\ast\ast}$. Therefore, $T^{\ast\ast}=T^{\ast}$ and (\ref{a priori-conclusion}) holds true.
  \end{proof}
  \subsection{Proof of Theorem \ref{global_existence_Lagrangian}}\label{subsection_6}
  In this subsection, we will finish the proof of Theorem \ref{global_existence_Lagrangian}. If $T^{\ast}=\infty$, the global solution is obtained. Suppose that $T^{\ast}<\infty$ and let $C_3$ be a generic positive constant depending on the initial data, $T^*$, and other known constants but independent of $t$. By \eqref{a priori-conclusion}, we deduce that for all $t\in [0,T^{\ast}),$
  \begin{equation} \label{contradiction_4_Lagrangian}
    \begin{split}
      \|v(t)\|_{H^3} + \|\widetilde{\eta}(t)\|_{H^4}\le C_3\varepsilon_1.
    \end{split}
  \end{equation}
  Taking  $T^*_1=T^{\ast}-\frac{T_1(C_3)}{2}\in [0,T^{\ast})$ as the initial time and applying  Proposition \ref{local_existence_1} with (\ref{contradiction_4_Lagrangian}), we can extend the existence time of the solution to
 $T^{\ast} + \frac{T_1}{2}.$ This contradicts the definition of $T^{\ast}$. Therefore, $T^{\ast} =\infty$, i.e., $(v,\widetilde{\eta})$ satisfying (\ref{a priori-conclusion}) for all $t\ge 0$ is the global solution of \eqref{compressible_elasticity_equation_5}. On the other hand, using \eqref{perturbation}, \eqref{perturbation_1} and $\eta = \widetilde{\eta} +y$, we have
 \begin{eqnarray*}\label{connection}
  \begin{cases}
  \xi(y,t)=(1+t)\widetilde{\eta}(y,t) +(1+t)y,\\
  V(y,t) = (1+t)v(y,t) + \frac{1}{(1+t)}\xi(y,t).
  \end{cases}
\end{eqnarray*}
It is easy to verify that $(\xi, V)$ is the global solution of \eqref{compressible_elasticity_equation_3} that satisfies
 \begin{eqnarray*}\label{Euler_part_3}
   \begin{cases}
     \|\xi(y,t) - (1+t)y\|_{H^3_y} = \|(1+t)\widetilde{\eta}(y,t)\|_{H^3_y}\le C\varepsilon_1(1+t)^{\frac{3}{2}},\\[1mm]
     \|\nabla_y \xi(y,t) - (1+t)I\|_{H^3_y} = \|(1+t)\nabla_y \widetilde{\eta}(y,t)\|_{H^3_y}\le C\varepsilon_1(1+t)^{\frac{1}{2}},\\[1mm]
     \|V(y,t) -y\|_{H^3_y} = \|(1+t)v(y,t) + \widetilde{\eta}(y,t)\|_{H^3_y} \le C(1+t)\|v(y,t)\|_{H^3_y} + C\|\widetilde{\eta}(y,t)\|_{H^3_y} \\
   \qquad\qquad\qquad\qquad\qquad\qquad\qquad\qquad\qquad\qquad \le C\varepsilon_1(1+t)^{\frac{1}{2}}.
   \end{cases}
 \end{eqnarray*}
{Thus \eqref{decay_estimates_Lagrangian} holds for all $t\ge 0$ and $\varepsilon_1\le \frac{1}{4\bar{C_1}}.$} The proof of the uniqueness is similar to that in Section \ref{subsection_3}. We omit it for brevity. The proof of Theorem  \ref{global_existence_Lagrangian} is complete.

\section{Global well-posedness in Eulerian coordinate}\label{section_4}
This section is devoted to the completion of the proof of Theorem \ref{global_existence}. Throughout  this section, C denotes a generic positive constant that depends on
some known constants, but independent of $\varepsilon$, $\varepsilon_1$ and $t$.
\subsection{Regularity estimates}\label{subsection_7}
With Theorem \ref{global_existence_Lagrangian}, we are going to construct a global-in-time solution to the system
\eqref{elasticity_equation_1} in Eulerian coordinate. For any   $(u_0, \xi_0)$ given in Theorem \ref{global_existence}, we define $V_0$ as follows
 \begin{equation}\label{V_0}
  V_0(y)=u_0(\xi_0(y)).
\end{equation}
{Combining \eqref{V_0} with the assumptions of Theorem \ref{global_existence}, we deduce
\begin{equation*}\label{1}
  \begin{split}
    \|V_0(y)-y\|_{H^3_y} + \|\xi_0(y) -y\|_{H^4_y}\le C\varepsilon  + C\varepsilon^3 \le C\varepsilon.
  \end{split}
\end{equation*}
Then, letting $\varepsilon = \frac{\varepsilon_1}{C}$ yields
\begin{equation}\label{2}
  \begin{split}
    \|V_0(y)-y\|_{H^3_y} + \|\xi_0(y) -y\|_{H^4_y}\le \varepsilon_1.
  \end{split}
\end{equation}
Substituting $V_0(y)$ and $\xi_0(y)$, that satisfy \eqref{V_0} and \eqref{2},  into Theorem \ref{global_existence_Lagrangian}, we obtain the existence of a unique global solution $(\xi,V)$ to the system \eqref{compressible_elasticity_equation_3}.

For any given $t\ge 0,$ by virtue of \eqref{decay_estimates_Lagrangian}$_2$, we deduce that the $C^1$ function $\xi(y,t): \mathbb{R}^d\to \mathbb{R}^d$ is  proper (i.e., the pre-image of every compact set is compact). Moreover, thanks to \eqref{decay_estimates_Lagrangian}$_3$ and the smallness of $\varepsilon$, we have
\begin{equation}\label{det}
  {\rm det}(\nabla_y \xi(y,t))> 0,
\end{equation}
for any $y\in \mathbb{R}^d$ and $t\ge 0.$

Since $\mathbb{R}^d$ is a smooth, connected $d$-dimensional manifold and is simply connected, 
we can apply the Hadamard's Global Inverse Function Theorem (see \cite[Theorem 6.2.8]{Krantz_2013}) to conclude that $\xi(\cdot,t):\mathbb R^d\to\mathbb R^d$ is a homeomorphism. This, combined with the $C^1$-regularity and strictly positive Jacobian determinant of $\xi(\cdot, t)$, implies
that $\xi(\cdot,t):\mathbb R^d\to\mathbb R^d$ is a $C^1$-diffeomorphism. Consequently, for any given $t\ge 0,$ the inverse function $y =\xi^{-1}(x,t): \mathbb{R}^d\to \mathbb{R}^d$ is well-defined.}

By substituting $y =\xi^{-1}(x,t)$ into the spatial variable in the solution stated in Theorem \ref{global_existence_Lagrangian}, we define $(\rho, u, F)$ in Eulerian coordinate in terms of $(\xi, V)$ as
\begin{eqnarray}\label{coordinate_transformation_1}
   \begin{cases}
    u(x,t) = V\Big(\xi^{-1}(x,t),t\Big),\\[1mm]
     F(x,t) = \nabla_y \xi\Big(\xi^{-1}(x,t),t\Big),\\[1mm]
    \rho(x,t) = \Big({\rm det}[\nabla_y \xi\big(\xi^{-1}(x,t),t\big)]\Big)^{-1}.
   \end{cases}
 \end{eqnarray}
Combining \eqref{coordinate_transformation_1} with Theorem \ref{global_existence_Lagrangian} and the fact that $(\xi, V)$ satisfies  system \eqref{compressible_elasticity_equation_3}, we conclude that $(\rho, u, F)$ is the unique global solution to \eqref{elasticity_equation_1}, see \cite{Jiang_2018} for instance.

Recall from \eqref{flow_map_2} that
\begin{equation*}
  B(y,t):= [(\nabla_{y} \xi)(y,t)^{-1}]^{\top},\,\,\,\,\,\,\,\,\,\,\,\,\,\,\,\,\,\,\,\,\,\,\,\,\,\,\,\,\mathcal{J}(y,t):= {\rm det}\nabla_{y} \xi(y,t).
\end{equation*}
For computational convenience, combining this definition with $y =\xi^{-1}(x,t)$, \eqref{decay_estimates_Lagrangian} and \eqref{coefficients_conditions}, we obtain
 \begin{eqnarray}\label{Euler_part_4}
   \begin{cases}
     \frac{1}{8}(1+t)^d\le \big|\mathcal{J}(\xi^{-1}(x,t),t)\big|\le 8(1+t)^d,\\[1mm]
     \frac{1}{8}(1+t)^{-d}\le \big|{\rm det}[\nabla_x \xi^{-1}(x,t)]\big| = \frac{1}{\big|\mathcal{J}(\xi^{-1}(x,t),t)\big|} \le 8(1+t)^{-d},\\[1mm]
     \|B(\xi^{-1}(x,t),t)\|_{L^\infty_x}\le \frac{C}{1+t}.
   \end{cases}
 \end{eqnarray}
Moreover, similar to \eqref{flow_map_2_1}, for any differentiable  function
$f(y,t)$ on $\mathbb{R}^d\times[0, +\infty)$, applying the chain rule together with $y =\xi^{-1}(x,t)$ yields
\begin{equation}\label{chain_rule}
   \begin{split}
    \nabla_{x_{i}} f(\xi^{-1}(x,t),t)& = \nabla_{y_{k}} f(\xi^{-1}(x,t),t) B_{ik}(\xi^{-1}(x,t),t)  \\ & = \nabla_{y_{k}} f(\xi^{-1}(x,t),t) \nabla_{x_{i}} \xi_{k}^{-1}(x,t),
   \end{split}
 \end{equation}
 where we have used the fact  $\nabla_{x} \xi^{-1}(x,t) \nabla_{y} \xi\big(\xi^{-1}(x,t),t\big)= I$ and that
 $$B_{ik}(\xi^{-1}(x,t),t) =\big(\nabla_{x} \xi^{-1}(x,t)\big)_{ik}^{\top} =  \nabla_{x_{i}} \xi_{k}^{-1}(x,t).$$
Using $y =\xi^{-1}(x,t)$,  \eqref{coordinate_transformation_1}$_{1}$, \eqref{Euler_part_4}$_{2,3}$ and \eqref{chain_rule}, we obtain
 \begin{equation}\label{Euler_part_12}
     \begin{split}
        &\int_{\mathbb{R}^d}  |V(y,t) - \frac{\xi(y,t)}{1+t}|^2 \,dy\\ 
        &=\int_{\mathbb{R}^d}  \big|V\big((\xi^{-1}(x,t),t)\big) - \frac{\xi\big((\xi^{-1}(x,t),t)\big)}{1+t}\big|^2 \big|{\rm det} [\nabla_x \xi^{-1}(x,t)]\big| \,dx\\
       &= \int_{\mathbb{R}^d}\big|u\big(x,t\big) - \frac{x}{1+t}\big|^2 \big|{\rm det} [\nabla_x \xi^{-1}(x,t)]\big|\,dx\\
      &\ge\frac{1}{8}(1+t)^{-d}\|u(x,t) - \frac{x}{1+t}\|_{L^2_x}^2
     \end{split}
   \end{equation}
and
\begin{equation}\label{Euler_part_13}
     \begin{split}
        &\int_{\mathbb{R}^d}  |\nabla_y\big(V(y,t) - \frac{\xi(y,t)}{1+t}\big)|^2 \,dy\\ 
        &\ge\frac{1}{|B|^2}\int_{\mathbb{R}^d}  |B(y,t)\big(\nabla_y V(y,t) - \frac{\nabla_y \xi(y,t)}{1+t}\big)|^2\,dy \\ 
        &\ge\frac{1}{C}(1+t)^2\int_{\mathbb{R}^d}  \big|B(\xi^{-1}(x,t),t)\nabla_y V\big((\xi^{-1}(x,t),t)\big) - \frac{I}{1+t}\big|^2 \big|{\rm det} [\nabla_x \xi^{-1}(x,t)]\big| \,dx\\
       &\ge \frac{1}{C}(1+t)^2\int_{\mathbb{R}^d}\big|\nabla_x \big(u(x,t) - \frac{x}{1+t}\big)\big|^2 \big|{\rm det} [\nabla_x \xi^{-1}(x,t)]\big|\,dx\\
      &\ge\frac{1}{8C}(1+t)^{2-d}\|\nabla_x \big(u(x,t) - \frac{x}{1+t}\big)\|_{L^2_x}^2,
     \end{split}
   \end{equation}
where we have used the fact that $B_{is}(y,t)\nabla_{y_s}\xi_{j}(y,t) = \delta_{ij}.$

Similarly, we have
 \begin{equation}\label{Euler_part_8}
   \begin{split}
 \int_{\mathbb{R}^d}  |B\nabla_y(B\nabla_y V)|^2 \,dy\ge \,\frac{1}{C} (1+t)^{-d}\|\nabla^2_x \big(u(x,t) - \frac{x}{1+t}\big)\|_{L^2_x}^2\,\,\,\,\,\,\,\,\,\,\,\,\,\,\,\,
   \end{split}
  \end{equation}
and
  \begin{align}\label{Euler_part_9}
     \int_{\mathbb{R}^d}  \big|B\nabla_y[B\nabla_y(B\nabla_y V)]\big|^2 \,dy\ge \,\frac{1}{C}(1+t)^{-d}\|\nabla^3_x \big(u(x,t) - \frac{x}{1+t}\big)\|_{L^2_x}^2.
  \end{align}
 For computational simplicity, by using $\nabla_y B_{ij}= -B_{il}\partial_l\nabla_y\xi_{k}B_{kj}$ and $\nabla_y \mathcal{J}= \mathcal{J} B_{ij}\partial_j\nabla_y \xi_{i}$ in \eqref{flow_map_5_1}, we conclude that
 \begin{eqnarray}\label{Euler_part_4_1}
  \begin{cases}
 |\nabla_y B| \le C |B|^2|\nabla^2_y \xi|,\\
 |\nabla_y^2 B| \le C|B|^2|\nabla^3_y \xi| + C|B|^3|\nabla^2_y\xi|^2,\\
  |\nabla_y \mathcal{J}| \le C|B||\mathcal{J}||\nabla^2_y \xi|,\\
  |\nabla_y^2 \mathcal{J}|\le C|B||\mathcal{J}||\nabla^3_y \xi|+C|\mathcal{J}||B|^2|\nabla^2_y\xi|^2,\\
  |\nabla_y^3 \mathcal{J}|\le C |B||\mathcal{J}||\nabla^4_y \xi|+C|\mathcal{J}||B|^2|\nabla^2_y\xi||\nabla^3_y\xi|+C|\mathcal{J}||B|^3|\nabla^2_y\xi|^3.
  \end{cases}
\end{eqnarray}
Using \eqref{decay_estimates_Lagrangian}, \eqref{Euler_part_12}-\eqref{Euler_part_9}, \eqref{Euler_part_4_1}, $\varepsilon = \frac{\varepsilon_1}{C}$ and $\nabla_y B_{ij}= -B_{il}\partial_l\nabla_y\xi_{k}B_{kj}$ in \eqref{flow_map_5_1}, we have
 \begin{equation}\label{Euler_part_ww_1}
   \begin{split}
    \|u(x,t) - \frac{x}{1+t}\|_{L^2_x}^2 \le\,& C(1+t)^d\int_{\mathbb{R}^d}  |V(y,t) - \frac{\xi(y,t)}{1+t}|^2 \,dy\\\le\,& C(1+t)^d\int_{\mathbb{R}^d} \Big(|V - y|^2 + |\frac{\xi}{1+t} - y|^2 \Big) \,dy\\ \le \,& C\varepsilon^2 (1+t)^{d+1},
   \end{split}
 \end{equation}
  \begin{equation}\label{Euler_part_ww_2}
   \begin{split}
    \|\nabla_x \big(u(x,t) - \frac{x}{1+t}\big)\|_{L^2_x}^2 \le\,& C(1+t)^{d-2}\int_{\mathbb{R}^d}  \big|\nabla_y\big(V(y,t) - \frac{\xi(y,t)}{1+t}\big)\big|^2\,dy\\\le\,& C(1+t)^{d-2}\int_{\mathbb{R}^d} \Big(|\nabla_y V - I|^2 + |\frac{\nabla_y \xi}{1+t} - I|^2 \Big)\,dy\\ \le\,&  C\varepsilon^2 (1+t)^{d-1},
   \end{split}
 \end{equation}
  \begin{equation}\label{Euler_part_ww_3}
   \begin{split}
    \|\nabla_x^2 \big(u(x,t) - \frac{x}{1+t}\big)\|_{L^2_x}^2 \le\,& C(1+t)^d \int_{\mathbb{R}^d}  \big|B\nabla_y(B\nabla_y V)\big|^2 \,dy\\\le\,& C(1+t)^d\Big(\int_{\mathbb{R}^d}  |B B\nabla^2_y V|^2 \,dy + \int_{\mathbb{R}^d}  |B \nabla_y B\nabla_y V|^2 \,dy\Big)\\\le\,& C\varepsilon^2(1+t)^{d-3} + C\varepsilon^2(1+t)^{d-4}\\\le\,& C\varepsilon^2(1+t)^{d-3}
   \end{split}
 \end{equation}
 and
  \begin{equation}\label{Euler_part_ww_4}
   \begin{split}
    &\|\nabla_x^3 \big(u(x,t) - \frac{x}{1+t}\big)\|_{L^2_x}^2 \\
    &\le C(1+t)^d \int_{\mathbb{R}^d}  \big|B\nabla_y[B\nabla_y(B\nabla_y V)]\big|^2 \,dy\\
    &\le C(1+t)^d\Big[\int_{\mathbb{R}^d}  |B\nabla_yB\nabla_yB\nabla_y V|^2 \,dy + \int_{\mathbb{R}^d}  |BB\nabla_y^2 B\nabla_y V|^2 \,dy\\
    &+ \int_{\mathbb{R}^d}  |BB\nabla_yB\nabla_y^2 V|^2 \,dy + \int_{\mathbb{R}^d}  |BBB\nabla_y^3 V|^2 \,dy\Big]\\
    &\le C\varepsilon^2(1+t)^{d-7} + C\varepsilon^2(1+t)^{d-6}+ C\varepsilon^2(1+t)^{d-6} + C\varepsilon^2(1+t)^{d-5}\\
    &\le C\varepsilon^2(1+t)^{d-5}.
   \end{split}
 \end{equation}
Combining $y =\xi^{-1}(x,t)$ with \eqref{coordinate_transformation_1}$_{2}$, \eqref{Euler_part_4}$_{2,3}$  and \eqref{chain_rule}, we derive
 \begin{equation}\label{Euler_part_5}
   \begin{split}
    &\int_{\mathbb{R}^d} |\nabla_y \xi(y,t) - (1+t)I|^2\,dy\\ 
    &= \int_{\mathbb{R}^d} |\nabla_y \xi(\xi^{-1}(x,t),t) - (1+t)I|^2 \big|{\rm det} [\nabla_x \xi^{-1}(x,t)]\big|\,dx\\ 
    &=
      \int_{\mathbb{R}^d} |F(x,t) - (1+t)I|^2 \big|{\rm det} [\nabla_x \xi^{-1}(x,t)]\big|\,dx\\
      &\ge \frac{1}{8}(1+t)^{-d}\|F(x,t) - (1 + t)I\|_{L^2_x}^2
   \end{split}
 \end{equation}
 and
 \begin{equation}\label{Euler_part_6}
   \begin{split}
    &\int_{\mathbb{R}^d} |\nabla_{y}(\nabla_y \xi(y,t) - (1+t)I)|^2\,dy\\ 
    &\ge
    \frac{1}{|B|^2} \int_{\mathbb{R}^d} |B(y,t)\nabla_{y}(\nabla_y \xi(y,t) - (1+t)I)|^2\,dy\\ 
    &\ge
    \frac{1}{C}(1+t)^2 \int_{\mathbb{R}^d} |B(\xi^{-1}(x,t),t) \nabla_{y}\big(\nabla_y \xi(\xi^{-1}(x,t),t) - (1+t)I\big)|^2 \big|{\rm det} [\nabla_x \xi^{-1}(x,t)]\big|\,dx\\ 
    &\ge
    \frac{1}{C}(1+t)^2
      \int_{\mathbb{R}^d} [\nabla_{x}\big(F(x,t) - (1+t)I\big)]^2 \big|{\rm det} [\nabla_x \xi^{-1}(x,t)]\big|\,dx \\
      &\ge\frac{1}{8C}(1+t)^{2-d}\|\nabla_{x} \big(F(x,t) - (1 + t)I\big)\|_{L^2_x}^2.
   \end{split}
 \end{equation}
Similar to the computations of \eqref{Euler_part_5} and \eqref{Euler_part_6}, using $y =\xi^{-1}(x,t)$ and \eqref{coordinate_transformation_1}-\eqref{chain_rule}, we derive
 \begin{equation}\label{Euler_part_6_1}
  \begin{split}
    \int_{\mathbb{R}^d}  |B\nabla_y(B\nabla_y^2 \xi)|^2\,dy \ge \,\frac{1}{C}(1+t)^{-d}\|\nabla_x^2 \big(F(x,t) - (1+t)I\big)\|_{L^2_x}^2\,\,\,\,\,\,\,\,\,\,\,\,\,\,
  \end{split}
\end{equation}
and
 \begin{equation}\label{Euler_part_7}
   \begin{split}
    \int_{\mathbb{R}^d}  |B\nabla_y[B\nabla_y(B\nabla_y^2 \xi)]|^2 \,dy \ge \,\frac{1}{C}(1+t)^{-d}\|\nabla_x^3 \big(F(x,t) - (1+t)I\big)\|_{L^2_x}^2.
   \end{split}
 \end{equation}
Using \eqref{decay_estimates_Lagrangian}, \eqref{Euler_part_5}-\eqref{Euler_part_4_1}, {$\varepsilon = \frac{\varepsilon_1}{C}$} and $\nabla_y B_{ij}= -B_{il}\partial_l\nabla_y\xi_{k}B_{kj}$ in \eqref{flow_map_5_1}, we have
 \begin{equation}\label{Euler_part_ee_1}
   \begin{split}
    \|F(x,t)-(1+t)I\|_{L^2_x}^2 \le\,& C(1+t)^d\int_{\mathbb{R}^d} |\nabla_y \xi(y,t) - (1+t)I|^2\,dy\le C\varepsilon (1+t)^{d+1},
   \end{split}
 \end{equation}
  \begin{equation}\label{Euler_part_ee_2}
   \begin{split}
    \|\nabla_x \big(F(x,t)-(1+t)I\big)\|_{L^2_x}^2 \le\,& C(1+t)^{d-2}\int_{\mathbb{R}^d} |\nabla_y(\nabla_y \xi(y,t) - (1+t)I)|^2\,dy\\ \le&\, C\varepsilon^2 (1+t)^{d-1},
   \end{split}
 \end{equation}
  \begin{equation}\label{Euler_part_ee_3}
   \begin{split}
    \|\nabla_x^2 \big(F(x,t)-(1+t)I\big)\|_{L^2_x}^2 \le\,& C(1+t)^d\int_{\mathbb{R}^d}  |B\nabla_y(B\nabla_y^2 \xi)|^2\,dy\\\le\,& C(1+t)^d\Big(\int_{\mathbb{R}^d}  |B\nabla_yB\nabla_y^2 \xi|^2 \,dy + \int_{\mathbb{R}^d}  |BB\nabla_y^3 \xi|^2 \,dy\Big)\\\le\,& C\varepsilon^2(1+t)^{d-4} + C\varepsilon^2(1+t)^{d-3}\\\le\,& C\varepsilon^2(1+t)^{d-3}
   \end{split}
 \end{equation}
 and
  \begin{equation}\label{Euler_part_ee_4}
   \begin{split}
    &\|\nabla_x^3 \big(F(x,t)-(1+t)I\big)\|_{L^2_x}^2 \\
    &\le C(1+t)^d \int_{\mathbb{R}^d}  \big|B\nabla_y[B\nabla_y(B\nabla_y^2 \xi)]\big|^2 \,dy\\
    &\le C(1+t)^d\Big[\int_{\mathbb{R}^d}  |B\nabla_yB\nabla_yB\nabla_y^2 \xi|^2 \,dy + \int_{\mathbb{R}^d}  |BB\nabla_y^2 B\nabla_y^2 \xi|^2 \,dy\\
    &+ \int_{\mathbb{R}^d}  |BB\nabla_yB\nabla_y^3 \xi|^2 \,dy + \int_{\mathbb{R}^d}  |BBB\nabla_y^4 \xi|^2 \,dy\Big]\\
    &\le C\varepsilon^2(1+t)^{d-7} + C\varepsilon^2(1+t)^{d-6}+ C\varepsilon^2(1+t)^{d-6} + C\varepsilon^2(1+t)^{d-5}\\
    &\le C\varepsilon^2(1+t)^{d-5}.
   \end{split}
 \end{equation}
Combining $y =\xi^{-1}(x,t)$ with \eqref{coordinate_transformation_1}$_{3}$ and \eqref{Euler_part_4}$_{2}$, we obtain
  \begin{equation}\label{Euler_part_nn_1}
     \begin{split}
        &\int_{\mathbb{R}^d} \Big|\frac{1}{{\rm det}[\nabla_y \xi(y,t)]}- \frac{1}{ (1+t)^d}\Big|^2\,dy\\ 
        &=\int_{\mathbb{R}^d} \Big|\frac{1}{{\rm det}[\nabla_y \xi(\xi^{-1}(x,t),t)]}- \frac{1}{ (1+t)^d}\Big|^2\Big|{\rm det} [\nabla_x \xi^{-1}(x,t)]\Big| \,dx\\
       &=\int_{\mathbb{R}^d}\big|\rho(x,t) - \frac{1}{(1+t)^d}\big|^2 \big|{\rm det} [\nabla_x \xi^{-1}(x,t)]\big|\,dx\\
      &\ge\frac{1}{8}(1+t)^{-d}\|\rho(x,t) -\frac{1}{(1+t)^d}\|_{L^2_x}^2.
     \end{split}
   \end{equation}
 Similarly, using $y =\xi^{-1}(x,t)$, \eqref{coordinate_transformation_1}$_{3}$, \eqref{Euler_part_4}$_{2}$ and \eqref{chain_rule}, we derive
 \begin{equation}\label{Euler_part_nn_2}
     \begin{split}
      \int_{\mathbb{R}^d}  \Big|B\nabla_y \Big(\frac{1}{{\rm det}[\nabla_y \xi]}\Big)\Big|^2\,dy
      \ge \,\frac{1}{C}(1+t)^{-d}\|\nabla_x \big(\rho(x,t) -\frac{1}{(1+t)^d}\big)\|_{L^2_x}^2,\,\,\,\,\,\,\,\,\,\,\,\,\,\,\,\,\,\,\,\,\,\,\,\,\,\,\,\,\,\,\,\,\,
     \end{split}
   \end{equation}
 \begin{equation}\label{Euler_part_nn_3}
     \begin{split}
       \int_{\mathbb{R}^d}  \Big|B\nabla_y\Big(B\nabla_y \big(\frac{1}{{\rm det}[\nabla_y \xi]}\big)\Big)\Big|^2\,dy
      \ge \,\frac{1}{C}(1+t)^{-d}\|\nabla_x^2 \big(\rho(x,t) -\frac{1}{(1+t)^d}\big)\|_{L^2_x}^2\,\,\,\,\,\,\,\,\,\,\,\,\,\,\,\,\,
     \end{split}
   \end{equation}
   and
   \begin{equation}\label{Euler_part_nn_4}
     \begin{split}
       \int_{\mathbb{R}^d}  \Big|B\nabla_y\Big[B\nabla_y\Big(B\nabla_y \big(\frac{1}{{\rm det}[\nabla_y \xi]}\big)\Big)\Big]\Big|^2\,dy
      \ge \,\frac{1}{C}(1+t)^{-d}\|\nabla_x^3 \big(\rho(x,t) -\frac{1}{(1+t)^d}\big)\|_{L^2_x}^2.
     \end{split}
   \end{equation}
By \eqref{Euler_part_nn_1}, we derive
 \begin{equation}\label{Euler_part_14}
     \begin{split}
        \|\rho(x,t) -\frac{1}{(1+t)^d}\|_{L^2_x}^2 &\le C{(1+t)^d}  \int_{\mathbb{R}^d} \Big|\frac{1}{(1+t)^d \mathcal{J}}\Big((1+t)^d - {\rm det}[\nabla_y \xi]\Big)\Big|^2\,dy\\
       &\le \frac{C}{(1+t)^{3d}}\int_{\mathbb{R}^d} \Big|{\rm det}[\nabla_y \xi]I - (1+t)^{d}I\Big|^2\,dy.
     \end{split}
   \end{equation}
Using the identity ${\rm det}[\nabla_y \xi] I = \nabla_y \xi {\rm adj}[\nabla_y \xi]$, we decompose:
 \begin{equation}\label{Euler_part_15}
   \begin{split}
     {\rm det}[\nabla_y \xi] I- (1+t)^dI = \big[\nabla_y \xi - (1+t)I\big]{\rm adj}[\nabla_y \xi] + (1+t)\big[{\rm adj}[\nabla_y \xi] - (1+t)^{d-1}I\big].
   \end{split}
 \end{equation}
For the first term, estimates \eqref{decay_estimates_Lagrangian}$_3$ and {$\varepsilon = \frac{\varepsilon_1}{C}$} yield
 \begin{equation}\label{Euler_part_16}
   \begin{split}
     \int_{\mathbb{R}^d} \Big|\big[\nabla_y \xi - (1+t)I\big]{\rm adj}[\nabla_y \xi]\Big|^2\,dy \le C \varepsilon^2 (1+t)^{2d-1},
   \end{split}
 \end{equation}
 where we have used the fact that $\big|{\rm adj}[\nabla_y \xi]\big|\le C|\nabla_y \xi|^{d-1} \le C(1+t)^{d-1}$ in \eqref{decay_estimates_Lagrangian}$_3.$

For the second term, we similarly obtain
 \begin{equation}\label{Euler_part_17}
   \begin{split}
     &\int_{\mathbb{R}^d} \Big|{\rm adj}[\nabla_y \xi] - (1+t)^{d-1}I\Big|^2\,dy\\ &\le C \Big(\|\nabla_y \xi\|_{L^\infty_y}^{2(d-2)} + (1+t)^{2(d-2)}\Big)\|\nabla_y \xi -(1+t)I\|_{L^2_y}^2\\ & \le C\varepsilon^2 (1+t)^{2d-3}.
   \end{split}
 \end{equation}
Substituting \eqref{Euler_part_16} and \eqref{Euler_part_17} into \eqref{Euler_part_14} and \eqref{Euler_part_15} gives
 \begin{equation}\label{Euler_part_18}
   \begin{split}
     \|\rho(x,t) -\frac{1}{(1+t)^d}\|_{L^2_x}^2 \le C\varepsilon^2 (1+t)^{-d-1}.
   \end{split}
 \end{equation}
 Next, we consider the estimate of $\|\nabla \big(\rho(x,t) -\frac{1}{(1+t)^d}\big)\|_{L^2_x}$ and $\|\nabla^2 \big(\rho(x,t) -\frac{1}{(1+t)^d}\big)\|_{L^2_x}$. Combining \eqref{decay_estimates_Lagrangian}, \eqref{Euler_part_4_1}, \eqref{Euler_part_nn_2}-\eqref{Euler_part_nn_3}, {$\varepsilon = \frac{\varepsilon_1}{C}$} with $\nabla_y B_{ij}= -B_{il}\partial_l\nabla_y\xi_{k}B_{kj}$ and $\nabla_y \mathcal{J}= \mathcal{J} B_{ij}\partial_j\nabla_y \xi_{i}$ in \eqref{flow_map_5_1}, we have
 \begin{equation}\label{Euler_part_10}
   \begin{split}
    &\|\nabla_x \big(\rho(x,t) -\frac{1}{(1+t)^d}\big)\|_{L^2_x}^2 \\ &\le C(1+t)^d \int_{\mathbb{R}^d}  |B\nabla_y(\mathcal{J}^{-1})|^2\,dy\\
     &\le C(1+t)^d\int_{\mathbb{R}^d}  |B\mathcal{J}^{-2}\nabla_y\mathcal{J}|^2\,dy\\
     &\le C(1+t)^d|B|^4 |\mathcal{J}|^{-2} \int_{\mathbb{R}^d}  |\nabla^2_y \xi|^2\,dy\\
     &\le C \varepsilon^2 (1+t)^{-d-3}
   \end{split}
 \end{equation}
 and
   \begin{equation}\label{Euler_part_11}
   \begin{split}
    &\|\nabla^2_x \big(\rho(x,t) -\frac{1}{(1+t)^d}\big)\|_{L^2_x}^2\\ 
    &\le C(1+t)^d\int_{\mathbb{R}^d}  \big|B\nabla_y[B\nabla_y(\mathcal{J}^{-1})]\big|^2\,dy \\
   &\le C(1+t)^d\Big[\int_{\mathbb{R}^d}  |B\nabla_y B\mathcal{J}^{-2}\nabla_y\mathcal{J}|^2 \,dy + \int_{\mathbb{R}^d}  |BB \mathcal{J}^{-3}\nabla_y\mathcal{J}\nabla_y\mathcal{J}|^2 \,dy\\ 
   & +\int_{\mathbb{R}^d}  |BB\mathcal{J}^{-2}\nabla_y^2\mathcal{J}|^2 \,dy\Big]\\
    &\le C \varepsilon^2 (1+t)^{-d-6} + C \varepsilon^2 (1+t)^{-d-6} +C \varepsilon^2 (1+t)^{-d-5}  \\  
    &\le C \varepsilon^2 (1+t)^{-d-5}.
  \end{split}
 \end{equation}
  Finally, we estimate $\|\nabla^3_x \big(\rho(x,t) -\frac{1}{(1+t)^d}\big)\|_{L^2_x}$. Combining \eqref{decay_estimates_Lagrangian}, \eqref{Euler_part_4_1}, \eqref{Euler_part_nn_4}, $\varepsilon = \frac{\varepsilon_1}{C}$ with $\nabla_y B_{ij}= -B_{il}\partial_l\nabla_y\xi_{k}B_{kj}$ and $\nabla_y \mathcal{J}= \mathcal{J} B_{ij}\partial_j\nabla_y \xi_{i}$ in \eqref{flow_map_5_1}, we have
  \begin{align}\label{Euler_part_11_1}
        &\notag\|\nabla^3_x \big(\rho(x,t) -\frac{1}{(1+t)^d}\big)\|_{L^2_x}^2\\ \notag &\le C(1+t)^d\int_{\mathbb{R}^d}  \Big|B\nabla_y\big\{B\nabla_y\big[B\nabla_y(\mathcal{J}^{-1})\big]\big\}\Big|^2\,dy\\ &
    \le C(1+t)^d\Big[\int_{\mathbb{R}^d}  |B\nabla_y B\nabla_y B\mathcal{J}^{-2}\nabla_y\mathcal{J}|^2 \,dy + \int_{\mathbb{R}^d}  |BB \nabla_y^2 B\mathcal{J}^{-2}\nabla_y\mathcal{J}|^2 \,dy\notag\\ & +\int_{\mathbb{R}^d}  |BB\nabla_y B\mathcal{J}^{-3}\nabla_y\mathcal{J}\nabla_y\mathcal{J}|^2 \,dy +\int_{\mathbb{R}^d}  |BB\nabla_y B\mathcal{J}^{-2}\nabla_y^2\mathcal{J}|^2 \,dy\\ & +\int_{\mathbb{R}^d}  |BBB\mathcal{J}^{-4}\nabla_y\mathcal{J}\nabla_y\mathcal{J}\nabla_y\mathcal{J}|^2 \,dy +\int_{\mathbb{R}^d}  |BBB\mathcal{J}^{-3}\nabla_y\mathcal{J}\nabla_y^2\mathcal{J}|^2 \,dy\notag\\ & \notag +\int_{\mathbb{R}^d}  |BBB\mathcal{J}^{-2}\nabla_y^3\mathcal{J}|^2\,dy\Big]\\ &\notag
    \le\, C \varepsilon^2 (1+t)^{-d-9} + C \varepsilon^2 (1+t)^{-d-8} + C \varepsilon^2 (1+t)^{-d-9}+ C \varepsilon^2 (1+t)^{-d-8} \notag \\ &+ C \varepsilon^2 (1+t)^{-d-9} + C \varepsilon^2 (1+t)^{-d-8} + C \varepsilon^2 (1+t)^{-d-7}\notag\\ &\le \,C \varepsilon^2 (1+t)^{-d-7}.\notag
\end{align}
 From  \eqref{Euler_part_ww_1}-\eqref{Euler_part_ww_4}, \eqref{Euler_part_ee_1}-\eqref{Euler_part_ee_4}, and \eqref{Euler_part_18}-\eqref{Euler_part_11_1}, we conclude that
 \begin{eqnarray*}\label{decay_1}
   \begin{cases}
     \|\nabla_x^i \big(\rho(x,t) -\frac{1}{(1+t)^d}\big)\|_{L^2_x}\le C\varepsilon (1+t)^{-\frac{d}{2}-\frac{1}{2}-i},\\[1mm]
     \|\nabla_x^i\big(u(x,t)-\frac{x}{1+t}, F(x,t) -(1+t)I\big)\|_{L^2_x} \le C\varepsilon(1+t)^{\frac{d}{2}+\frac{1}{2}-i}
   \end{cases}
 \end{eqnarray*}
 for all $t \ge 0$ and $i=0,1,2,3$. The proof of Theorem \ref{global_existence} is complete.

\section*{Acknowledgments}X. Hu was partially supported by the RFS Grant and GRF Grant from the Research Grants Council
(Project Numbers PolyU 11302523, 11300420, and 11302021). H. Wen was supported by the National Natural Science Foundation of China $\#12471209$ and by the Guangzhou Basic and Applied Research Projects $\#SL2024A04J01206$.  C. Wang
was partially supported by NSF grants 2101224 and 2453789, and Simons Travel Grant TSM-00007723.

\end{document}